\documentclass[12pt,a4paper]{article}

\usepackage[left=25mm,right=25mm,top=25mm,bottom=25mm]{geometry}
\usepackage{fontspec}
\usepackage{amsmath,amssymb,amsthm,mathtools}
\usepackage{tikz}
\usepackage{caption}
\usepackage{float}
\usepackage{cite}
\usepackage[colorlinks=true,linkcolor=blue,citecolor=blue,urlcolor=blue]{hyperref}

\newtheorem{theorem}{Theorem}[section]
\newtheorem{conjecture}[theorem]{Conjecture}
\newtheorem{lemma}[theorem]{Lemma}
\newtheorem{corollary}[theorem]{Corollary}
\newtheorem{proposition}[theorem]{Proposition}

\newcommand{\comp}{\overline}
\newcommand{\eps}{\varepsilon}
\newcommand{\resultcite}[1]{[\citen{#1}]}

\title{The maximum size of simple solid\\matching covered graphs}
\author{Tong Zhang, Wei Li\thanks{ Corresponding author.
		E-mail addresses: liw@nwpu.edu.cn.}\\[0.6em]
\normalsize School of Mathematics and Statistics,\\[0.35em]
\normalsize Northwestern Polytechnical University, Xi'an, Shaanxi, 710129,
P.~R.~China.}
\date{}

\begin{document}

\maketitle

\begin{abstract}
A connected graph with at least two vertices is matching covered if each of
its edges is contained in a perfect matching.
 A matching covered graph is solid if every separating cut in it is a tight cut. A matching covered graph which is free of nontrivial tight cuts is a brick if it is nonbipartite.
Every bipartite matching covered graph is solid. Lucchesi and Murty conjectured that there exists a positive integer $N$ such that, for every integer $n\ge N$, the maximum number of edges in a simple solid matching covered graph on $2n$ vertices is $n^2$. In this paper, we disprove this Conjecture, and show that the maximum number of edges of a simple solid matching covered graph on $2n$ ($n\ge2$) vertices is $n^2+2$. Moreover, we characterize the graphs attaining this bound. In addition, we prove that every simple solid brick of order $2n$ has at most $n^2$ edges for $n\ge4$.
\end{abstract}

\noindent\textbf{Keywords:} perfect matching; solid matching covered graphs; solid bricks.

\section{Introduction}
% The cut $C$ is tight if %$|M\cap C|=1$ for every perfect matching $M$ of $G$.
%A cut $C$ of a matching covered graph $G$ is a separating cut if both $C$-%contractions of $G$ are also matching covered.

All graphs considered in this paper are loopless, but they may have
multiple edges. A graph is \emph{simple} if it has no multiple edges. For
a graph $G$, let $V(G)$ and $E(G)$ denote its vertex set and edge set,
respectively. The \emph{order} of $G$ is $|V(G)|$, and the \emph{size} of $G$ is
$|E(G)|$. For a vertex $v$ in $V(G)$, the neighbour set of $v$ is the set of all
vertices of $G$ adjacent to $v$, denoted by $N_G(v)$. The degree of $v$ is
denoted by $d(v)$.
% When $G$ is simple, it is the size $|N_G(v)|$ of the neighbour
% set; in general, it is the number of edges incident with $v$.
Let $\delta(G)$ and $\Delta(G)$ denote the minimum and maximum degrees of $G$, respectively.
A set $S\subseteq V(G)$ is a
\emph{stable set} if no edge of
$G$ has both end vertices in $S$. For $X\subseteq V(G)$, the subgraph of $G$
induced by $X$, denoted by $G[X]$, has vertex set $X$ and edge set consisting
of all edges of $G$ with both end vertices in $X$. The complement of a simple
graph $G$is denoted by $\comp G$.
%has vertex set $V(G)$, and two distinct
%vertices are adjacent in $\comp G$ if and only if they are not adjacent in
%$G$.
The union of two vertex-disjoint graphs $G$
and $H$ is denoted by $G\cup H$.
%is the graph with vertex set
%$V(G)\cup V(H)$ and edge set $E(G)\cup E(H)$.
The join of $G$ and $H$, denoted by $G\vee H$, is obtained from $G\cup H$ by joining each vertex of
$G$ to every vertex of $H$.
%A graph is \emph{bipartite} if its vertex set can
%be partitioned into two stable sets $A$ and $B$. A graph is \emph{connected} if
%every pair of its vertices is joined by a path.
A \emph{component} of a graph
$G$ is a maximal connected subgraph of $G$. A component is \emph{odd} or
\emph{even} according as its order is odd or even.
%It is \emph{trivial} if it
%has order one, and is \emph{nontrivial} otherwise.
A \emph{vertex cut} of a graph
$G$ is a subset $S\subseteq V(G)$ such that $G-S$ is disconnected. A
\emph{$k$-vertex-cut} is a vertex cut of cardinality $k$.
%For a positive integer
%$k$, a graph is \emph{$k$-connected} if it has at least $k+1$ vertices and has
%no vertex cut of cardinality less than $k$.
A \emph{matching} in $G$ is a set $M\subseteq E(G)$ such that no vertex of $G$
is incident with more than one edge of $M$. It is \emph{perfect} if every
vertex is incident with precisely one edge of $M$. A graph is \emph{matchable}
if it has a perfect matching. A \emph{matching covered} graph is a connected
graph on at least two vertices in which each edge is covered by a perfect
matching.

For disjoint sets $X,Y\subseteq V(G)$, let
$E[X,Y]$ denote the set of edges with one end vertex in $X$ and the other in $Y$. For $X\subseteq V(G)$, set
$\comp X=V(G)-X$. If $X$ is nonempty, let
$\partial(X)=E[X,\comp X]$, called the \emph{edge cut}
\emph{cut} associated with $X$. If $G$ is connected, the sets $X$ and
$\comp X$ are the \emph{shores} of $\partial(X)$. A cut $\partial(X)$ is odd
if both of its shores have odd cardinality. The cut $\partial(X)$ is
\emph{trivial} if either $|X|=1$ or $|\comp X|=1$. The two $\partial(X)$-contractions
of $G$, denoted by $G/X$ and $G/\comp X$, are obtained by contracting $X$ and
$\comp X$, respectively, to single new vertices, removing all resulting loops
and retaining all resulting parallel edges.
A cut $C$ of a matching covered graph $G$ is a \emph{separating cut} if
both $C$-contractions of $G$ are matching covered. The cut $C$ is a
\emph{tight cut} if $|M\cap C|=1$ for every perfect matching $M$ of $G$.
Every tight cut of a matching covered graph is a separating cut~\cite{lovasz-lattice}. A matching
covered graph which is free of nontrivial tight cuts is a \emph{brick}
if it is nonbipartite and a \emph{brace} if it is bipartite. Lov\'asz showed that
any two tight cut decompositions of a matching covered graph yield the same
list of bricks and braces, up to multiple edges~\cite{lovasz-lattice}.
%A brick $G$ is
%\emph{minimal} if $G-e$ is not a brick for every edge $e\in E(G)$. A brick $G$
%is \emph{extremal} if it has precisely $|E(G)|-|V(G)|+1$ perfect matchings.
A matching covered graph is \emph{solid} if every separating cut is a tight cut.
In particular, a brick is solid if it is free of nontrivial separating cuts.

The investigations on solid matching covering graphs began in the 2000s. Carvalho et al. discovered solid matching covered graphs in
connection with their work on a conjecture of Lov\'asz, which asserts that every
brick $G$ different from $K_4$, the triangular prism and the Petersen graph has an
edge $e$ such that $G-e$ is matching covered and has exactly one brick in a tight
cut decomposition~\cite{clm-characteristic}. They then found that solid matching covered graphs have many properties akin to those of bipartite
matching covered graphs~\cite{independent-perfect-matchings}.
One such similarity is that the perfect matching polytopes of the two classes
admit analogous descriptions~\cite{polytope-solid}. Carvalho et al.
showed that a matching covered graph is solid if and only if
each of its bricks is solid~\cite{clm-characteristic}. Thus, recognizing solid matching
covered graphs reduces to recognizing solid bricks. Following this, a plenty of research have been done on solid and nonsolid bricks ~\cite{cubic-solid,extremal-nonsolid}.
%Chen, Feng, Lu and Zhang described an infinite family of cubic solid
%bricks containing two vertex-disjoint odd cycles~\cite{cubic-solid}. Feng and Lu %presented an infinite family of
%counterexamples to a conjecture of Carvalho, Lucchesi and Murty which asserted
%that, apart from the Petersen graph, every extremal nonsolid brick is obtained
%from an extremal brick by replacing a cubic vertex with a triangle, up to multiple
%edges~\cite{extremal-nonsolid}.
A natural problem is to determine the maximum number of edges of a simple solid matching covered graph with given order.
Lucchesi and Murty found that the maximum size of a simple solid matching
covered graph on four vertices is $6$, and the corresponding maximum
on six vertices is $11$~\cite{lm-book}. Moreover,
a simple bipartite graph of order $2n$ has at most $n^2$ edges, and
$|E(K_{n,n})|=n^2$. Since every bipartite matching covered graph is solid,
the maximum number of edges of a simple solid matching covered graph on $2n$ vertices
is at least $n^2$. Thus, they proposed the following conjecture.

\begin{conjecture}~\cite{lm-book}\label{conj:LM}
There exists a positive integer $N$ such that, for every integer $n\ge N$,
every simple solid matching covered graph $G$ of order $2n$ satisfies
$|E(G)|\le n^2$.
\end{conjecture}

The study of bricks has already produced several results concerning their number of edges and the number of cubic vertices.
In 2006, Norine and Thomas proved that every minimal brick of
order $2n$, for $n\ge5$, has at most $5n-7$ edges~\cite{minimal-bricks-nt}.
 Nearly two decades later, Feng and Yan characterized all minimal
bricks attaining equality~\cite{minimal-bricks}.  Lin, Zhang and Lu proved that every minimal brick has at least four cubic
vertices~\cite{cubic-vertices-minimal}. He and Lu proved that
every solid minimal brick $G$ has at least $\frac{2}{5}|V(G)|$ cubic
vertices~\cite{solid-minimal-cubic}. For minimal bricks with average degree at least
$\frac{23}{5}$, Feng verified the Norine--Thomas conjecture, which asserts that
there exists a positive constant $\alpha$ such that every minimal brick $G$ has at
least $\alpha|V(G)|$ cubic vertices~\cite{norine-thomas-feng}. These results all
concern minimal bricks. A
solid brick need not be minimal. By now,  the maximum number of
edges in a solid brick has not been determined. In this paper, we first focus on
this problem, and find the bound for the size of simple solid bricks, which verifies that Conjecture 1 holds for simple solid bricks.

\begin{theorem}\label{thm:brick-bound}
Let $G$ be a simple solid brick of order $2n$ for $n\ge4$. Then
\[
 |E(G)|\le n^2.
\]
\end{theorem}

For the size of simple solid matching covered graphs, we give counter examples to disprove Conjecture 1. Moreover,
we prove that for every integer $n\ge2$,  each
simple solid matching covered graph of order $2n$ has at most $n^2+2$ edges,
and characterize the graphs attaining the upper bound.
Our result is as below.

\begin{theorem}\label{thm:main}
Let $G$ be a simple solid matching covered graph of order $2n$, for $n\ge2$.
Then
\[
 |E(G)|\le n^2+2.
\]
Equality holds if and only if one of the following statements holds
\begin{enumerate}
\item[(i)] if $n=2$, $G\cong A_2$;
\item[(ii)] if $n=3$, $G\cong A_3$ or $G\cong D_3$;
\item[(iii)] if $n\ge4$, $G\cong A_n$, $G\cong B_n$, or $G\cong D_n$.
\end{enumerate}
\end{theorem}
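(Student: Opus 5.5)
The plan is induction on $n$, combining the tight cut decomposition with Theorem~\ref{thm:brick-bound}. We use that $G$ is solid if and only if each of its bricks is solid~\cite{clm-characteristic}, and that both contractions of a tight cut in a solid matching covered graph are again solid matching covered graphs. This is standard, since the separating cuts of a tight cut contraction lift to separating cuts of $G$. The argument splits according to whether $G$ has a nontrivial tight cut.

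\emph{Case 1: $G$ has no nontrivial tight cut.} If $G$ is bipartite, it is a simple bipartite graph of order $2n$, so $|E(G)|\le n^2$, which is strictly smaller than $n^2+2$. If $G$ is a brick and $n\ge4$, Theorem~\ref{thm:brick-bound} gives $|E(G)|\le n^2<n^2+2$. So no brick and no brace attains equality once $n\ge4$. For $n=2$ the only simple brick on four vertices is $K_4$, with $6=2^2+2$ edges; we expect this to be $A_2$. For $n=3$ we check the simple solid bricks on six vertices by hand. Here we use the Lucchesi--Murty value $11$~\cite{lm-book} and identify the graphs attaining it, which should be among $A_3$ and $D_3$.

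\emph{Case 2: $G$ has a nontrivial tight cut.} By the Edmonds--Lov\'asz--Pulleyblank theorem, $G$ then has either a nontrivial barrier cut or a $2$-separation cut, and we treat the two separately.

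In the barrier case, let $B$ be a maximal barrier with $|B|=b\ge2$. Then $B$ is stable and $G-B$ consists of exactly $b$ odd components $H_1,\dots,H_b$ of orders $n_1,\dots,n_b$, with $\sum n_i=2n-b$. Every edge lies inside some $H_i$ or between $H_i$ and $B$. For each $i$, form the contraction $G_i=G/\comp{V(H_i)}$, which has order $n_i+1$. Let $|E(H_i)|$ count the edges of $G$ inside $H_i$ and $e(H_i,B)$ the edges of $G$ between $H_i$ and $B$. Then
\[
|E(G)|=\sum_i\bigl(|E(H_i)|+e(H_i,B)\bigr),
\]
with $|E(H_i)|\le\binom{n_i}{2}$ and $e(H_i,B)\le b\,n_i$. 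For components with $n_i\ge3$ we also apply induction to the underlying simple graph of $G_i$, which has order $n_i+1$. This bounds $|E(H_i)|+|N(H_i)\cap\ldots|$ by $\bigl(\tfrac{n_i+1}{2}\bigr)^2+2$, and we combine it with the trivial count $b\,n_i$ for the edges to $B$. Summing gives a function of $(n_1,\dots,n_b,b)$ that is convex in the $n_i$. It is therefore maximized at an extreme configuration: either all $H_i$ are single vertices (so $G$ is bipartite with at most $n^2$ edges), or one component is large and the rest are singletons. Evaluating the extreme configuration should give at most $n^2+2$. Equality should force one component of order $2n-2b+1$ that is nearly complete, fully joined to $B$, which presumably yields $B_n$ or $D_n$.

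In the $2$-separation case, take a $2$-vertex-cut $\{u,v\}$ with even sides $X$ and $Y$, of orders $2k$ and $2(n-k-1)$. The edges are those inside $X\cup\{u,v\}$ plus those inside $Y\cup\{u,v\}$, with $uv$ counted at most once. Each side, after the standard replacement by an edge or virtual vertex, is a smaller solid matching covered graph. Applying induction to both sides gives roughly $(k+1)^2+(n-k)^2+O(1)$ edges. This is far below $n^2+2$ unless $k$ is tiny, and the small cases are handled directly; they should produce $A_n$ (for instance, $A_n$ may arise by attaching a small piece to $K_{n-1,n-1}$).

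Finally we check that $A_n$, $B_n$, $D_n$ are simple, solid and matching covered with $n^2+2$ edges. Solidity follows by exhibiting their tight cut decompositions: their bricks are $K_4$ or other small solid bricks, and their braces are bipartite.

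I expect the main obstacle to be the barrier-case counting. Contractions create parallel edges, so induction applies only to underlying simple graphs, and this loses exactly the edges from $H_i$ to the contracted vertex that we need to track. The optimization must be sharp to within the additive constant $2$. I would first try to bound each $H_i$ crudely by $\binom{n_i}{2}+b\,n_i$ and use the matching covered property to exclude near-complete configurations. In particular, no edge of $H_i$ may be forced out of every perfect matching, which forbids some edges when $n_i$ is large relative to $b$. Induction would be invoked only where this crude bound fails. Pinning down the equality structures, and showing that no other near-extremal configurations survive, will require careful case analysis for small $b$.
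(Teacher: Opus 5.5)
Your skeleton is the paper's: induction on $n$, the bipartite and brick cases via Theorem~\ref{thm:brick-bound}, a nontrivial ELP cut, and induction on the simple contraction at each odd component of $G-B$ and on both shores of a $2$-separation. The step you yourself flag as the main obstacle is left unresolved, however, and as written it does not close. Let $p_i$ be the number of vertices of $H_i$ with a neighbour in $B$. Then $\operatorname{Sim}(G_i)$ has exactly $|E(H_i)|+p_i$ edges, so induction gives $|E(H_i)|\le\bigl(\frac{n_i+1}{2}\bigr)^2+2-p_i$. Adding your count $e(H_i,B)\le b\,n_i$ wastes $n_i-p_i$. For one component of order $n_1=2n-2b+1$ and $b-1$ singletons your total is $n^2+2+(n_1-p_1)$, and nothing forces $p_1=n_1$ in advance. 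The crude fallback $|E(H_i)|\le\binom{n_i}{2}$ cannot work, since it is about $2(n-b)^2$ for a large component.

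The missing observation is that only those $p_i$ vertices see $B$, so $e(H_i,B)\le b\,p_i$. The loss from simplification then cancels exactly: $|E(H_i)|+e(H_i,B)\le|E(\operatorname{Sim}(G_i))|+(b-1)p_i$. Put $\lambda_i=\frac{n_i-1}{2}$, so that $\sum_i(\lambda_i+1)=n$. Summing then gives the paper's exact identity
\[
 n^2+2-|E(G)|=2\Bigl(1-r+\sum_{i<j}\lambda_i\lambda_j\Bigr)+(\text{nonnegative defect terms}),
\]
where $r$ is the number of nonbipartite contractions. Since $\lambda_i\ge1$ for each of these, the bracket is at least $\frac{(r-1)(r-2)}{2}\ge0$.

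This identity also shows that your predicted extremal structures are wrong.

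\emph{The case $r=2$.} The bracket vanishes for $r=2$ as well as for $r=1$: two components of order $3$ tie exactly with one of order $5$ plus a singleton. So an extreme-configuration argument gives the bound but not the equality cases. $B_n$ is precisely the non-extreme configuration $G-B\cong 2K_3\cup(n-4)K_1$, not a single large component.

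\emph{The case $r=1$.} Equality forces the other components to be singletons and every vertex of $H_1$ to be joined to all of $B$. It also forces $\operatorname{Sim}(G_1)$ to be extremal, with its contraction vertex adjacent to every other vertex. An extremal graph of order $2m$ has maximum degree $m+1$ ($A_m$) or $m+2$ ($B_m$, $D_m$). This leaves only $\operatorname{Sim}(G_1)\in\{A_2,D_3\}$, i.e.\ $H_1\cong K_3$ (giving $A_n$) or $H_1\cong F$ (giving $D_n$).

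\emph{The $2$-separation case.} It never yields $A_n$. Let $S_X,S_Y$ be the simple contractions, of orders $2s,2t$ with $s+t=n+1$. Using that $u$ and $v$ have neighbours on both sides, the exact count is $|E(G)|=|E(S_X)|+|E(S_Y)|-2+\gamma$, where $\gamma=1$ iff $uv\in E(G)$. Hence $n^2+2-|E(G)|\ge 2\bigl((s-1)(t-1)-1\bigr)$, and equality forces $s=t=2$, $n=3$ and $G\cong D_3$.

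\emph{The case $n=3$ for bricks.} Here $A_3$ and $D_3$ are not bricks. What you actually need is that every simple solid brick on six vertices has at most $10$ edges (Lemma~\ref{lem:small-bricks}). Otherwise $\comp G$ has at most four edges and maximum degree at most $2$. It is triangle-free, since a stable triple in $G$ would make $G$ bipartite. It therefore has a balanced bipartition, so $G$ contains two disjoint triangles covering $V(G)$, contradicting Theorem~\ref{thm:odd-bicycle}.
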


The rest of the paper is organized as follows. In Section~2, we present some definitions and results used in this paper. In Section~3, we prove Theorem~\ref{thm:brick-bound} to show that Conjecture \ref{conj:LM} is true for simple solid brick. In Section~4, we first construct the graphs $A_n$, $B_n$ and $D_n$ to disprove Conjecture 1. Then we mainly prove Theorem~\ref{thm:main}
to give a tight upper bounds on the maximum number of edges of simple solid matching covered graphs.

%show that the maximum size of a simple solid matching covered graph is $n^2+2$, and %determine the graphs achieving the upper bound.

%%%%%%%%%%%%%%%%%%%%%%%%%%%%%%%%%%%%%%%%%%%%%

\section{Preliminaries}

This section collects some definitions and results used in our proofs. For a graph $G$, let
$\operatorname{Sim}(G)$ denote its underlying simple graph, obtained by replacing
each set of parallel edges by a single edge.
A graph $G$ is \emph{factor-critical} if $G-v$ is matchable for every vertex
$v$ of $G$. A nontrivial graph $G$ is \emph{bicritical} if $G-u-v$ is matchable
for any two distinct vertices $u$ and $v$ of $G$.

In 1982, Edmonds et al. discovered the following characterization
of bricks in terms of bicritical graphs.

\begin{theorem}[\resultcite{elp}]\label{thm:brick-characterization}
A nonbipartite matching covered graph is a brick if and only if it is
3-connected and bicritical.
\end{theorem}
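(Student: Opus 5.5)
Since this is a known result of Edmonds, Lov\'asz and Pulleyblank, I would argue through the two standard kinds of tight cuts: \emph{barrier cuts} and \emph{2-separation cuts}. The plan is to handle each direction separately. Throughout, $G$ is a nonbipartite matching covered graph.

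For the forward direction, suppose $G$ is a brick. First I show that $G$ is bicritical. By Tutte's theorem, a matchable graph that is not bicritical has a barrier $B$ with $|B|\ge 2$, meaning that $G-B$ has exactly $|B|$ odd components. Because $G$ is matching covered, every perfect matching joins each vertex of $B$ to a distinct odd component. Consequently $G-B$ has no even components and $B$ is stable, since an edge inside $B$ would lie in no perfect matching. If every odd component were a single vertex, $G$ would be bipartite with bipartition $B$ and $V(G)-B$, which is excluded. So some odd component $K$ has $|V(K)|\ge 3$. Every perfect matching uses exactly one edge of $\partial(V(K))$, so this cut is tight. It is nontrivial, because its other shore contains $B$ and at least one further odd component. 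This contradicts $G$ being a brick.

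Next I show that $G$ is 3-connected. A brick has at least $4$ vertices, and a cut vertex is impossible in a matching covered graph on at least $4$ vertices. So suppose $\{u,v\}$ is a 2-vertex-cut. Since $G$ is bicritical, $G-u-v$ has a perfect matching, so every component of $G-u-v$ is even. Pick one component $K$ and set $X=V(K)\cup\{u\}$, which has odd order. Every edge of $\partial(X)$ is incident with $u$ or with $v$. A perfect matching $M$ therefore meets $\partial(X)$ in at most two edges, and it meets it in an odd number by parity, hence in exactly one. So $\partial(X)$ is tight. Both shores have at least $3$ vertices: $|X|\ge 3$ because $|V(K)|\ge 2$, and $\comp X$ contains $v$ together with another even component. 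This again contradicts $G$ being a brick.

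For the converse, assume $G$ is 3-connected and bicritical but has a nontrivial tight cut $C=\partial(X)$. I would aim for the Edmonds--Lov\'asz--Pulleyblank structure lemma: a matching covered graph with a nontrivial tight cut has either a nontrivial barrier or a 2-separation whose associated cut is nontrivial and tight. Either outcome finishes the proof. A barrier of size at least $2$ contradicts bicriticality by Tutte's theorem, and a 2-separation contradicts 3-connectivity.

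To prove the lemma, I would choose the nontrivial tight cut $C$ with a shore $X$ of minimum size. I would then study the two contractions $G/X$ and $G/\comp X$, both of which are matching covered. Next I would apply LP duality for the perfect matching polytope: since $C$ is tight, the odd-set constraint for $C$ holds with equality on the whole face. An optimal dual solution with laminar support then produces either a vertex set $B$ attaining equality in Tutte's condition (a barrier) or a pair of vertices separating $X$ (a 2-separation). This step is the main obstacle, and I would follow the original dual-uncrossing argument. An alternative is Szigeti's short proof: pick $u,v$ on the same side of $C$ and use bicriticality of $G$ to find a perfect matching of $G-u-v$, then derive a contradiction with tightness unless the structure above appears.
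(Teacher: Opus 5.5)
The paper does not prove this statement; it quotes it from Edmonds, Lov\'asz and Pulleyblank. Your forward direction is correct and complete. The barrier argument works as you give it: a barrier $B$ with $|B|\ge 2$ is stable, $G-B$ has no even components, and the cut $\partial(V(K))$ around an odd component with at least three vertices is nontrivial and tight. The parity argument showing that $\partial(V(K)\cup\{u\})$ is a nontrivial tight cut for a $2$-vertex-cut $\{u,v\}$ is also the standard one. Your reduction of the converse is valid too. A nontrivial barrier cut in a matching covered graph comes from a barrier with at least two vertices, which contradicts bicriticality. A nontrivial $2$-separation cut comes from a $2$-vertex-cut, which contradicts $3$-connectivity.

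The gap is that the ELP tight-cut theorem you reduce to is the entire content of the hard direction, and your last paragraph does not prove it.
\begin{itemize}
\item You choose $X$ of minimum size, but never use that minimality.
\item Saying that a laminar optimal dual solution ``produces either a barrier or a $2$-separation'' restates the conclusion. It does not identify the two vertices or explain why they separate $G$.
\item Your alternative hint, taking a perfect matching $M$ of $G-u-v$ for $u,v$ on one shore, yields no contradiction as stated. $M$ is not a perfect matching of $G$, so tightness of $C$ does not directly constrain $|M\cap C|$; parity only makes it odd. You also do not say what is computed or where the contradiction would come from.
\end{itemize}
What is missing is an actual mechanism that turns a nontrivial tight cut of a bicritical graph into a $2$-vertex-cut. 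One way would be to show that all edges of some nontrivial tight cut are incident with just two vertices; those two vertices then separate $G$. Your converse is complete only as a citation of that theorem. That is acceptable relative to this paper, which quotes the same result from the same source as Theorem~\ref{thm:ELP}, but it is not a self-contained proof.
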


The following characterization of solid bricks will be used throughout
Section~3.

\begin{theorem}[\resultcite{polytope-solid}]\label{thm:odd-bicycle}
A brick $G$ is nonsolid if and only if it has two disjoint odd cycles $C_1$
and $C_2$ such that $G-(V(C_1)\cup V(C_2))$ has a perfect
matching.
\end{theorem}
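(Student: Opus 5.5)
This statement is the cited characterization of Carvalho, Lucchesi and Murty. I would prove it by working with the \emph{fractional perfect matching polytope}
\[
 FPM(G)=\{x\in\mathbb{R}^{E}_{\ge0}: x(\partial(v))=1 \text{ for all } v\in V(G)\}.
\]
The classical fact I would rely on is that its vertices are half-integral. More precisely, every vertex $x$ has the form $x=\chi^{M'}+\tfrac12\sum_i\chi^{E(C_i)}$, where the $C_i$ are disjoint odd cycles and $M'$ is a perfect matching of $G-\bigcup_i V(C_i)$. The number of odd cycles is even, because $|V(G)|$ is even. The plan is to show that ``$G$ is solid'' and ``every vertex of $FPM(G)$ that is a convex combination of perfect matchings is integral'' are both equivalent to the absence of the configuration in Theorem~\ref{thm:odd-bicycle}.

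\emph{Sufficiency.} Let $C_1,C_2$ and $M'$ be given, and set $X=V(C_1)$, an odd set. Since $G$ is a brick, it is bicritical and $3$-connected by Theorem~\ref{thm:brick-characterization}. I would show that $\partial(X)$ is separating, is nontrivial because $|X|\ge3$ and $|\comp X|\ge3$, and is not tight.

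For non-tightness, I would exhibit a perfect matching meeting $\partial(X)$ in at least three edges. Bicriticality lets me pair off $C_1$ and $C_2$ through edges between them, or through $M'$-alternating paths, so that at least three edges cross.

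For the separating property, I would use the criterion that $\partial(X)$ is separating if and only if every edge lies in a perfect matching meeting $\partial(X)$ exactly once. The half-integral point $x=\chi^{M'}+\tfrac12(\chi^{E(C_1)}+\chi^{E(C_2)})$ satisfies $x(\partial(X))=0$, and I would use it as a certificate. Since $G$ is matching covered, every edge lies in some perfect matching. For an edge $e$ I would modify such a matching along the odd cycles $C_1$ and $C_2$, whose complements in their own vertex sets are matchable after deleting a single vertex, so that exactly one edge crosses $\partial(X)$.

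\emph{Necessity.} Suppose $G$ has a nontrivial separating cut $C=\partial(X)$ that is not tight. Choose a perfect matching $M_0$ with $|M_0\cap C|=k\ge3$ minimal. Because $C$ is separating, both contractions $G/X$ and $G/\comp X$ are matching covered, so each edge of $M_0\cap C$ extends to a perfect matching of each contraction.

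The idea is to take two such extensions $N_1$ (in $G/\comp X$) and $N_2$ (in $G/X$), through different edges $f_1,f_2$ of $M_0\cap C$, and to consider $M_0\triangle (N_1\cup N_2)$. Inside $G[X]$, the symmetric difference of $M_0|_X$ and $N_1|_X$ consists of alternating paths ending at the $k$ vertices of $X$ covered by $M_0\cap C$ and the one vertex covered by $f_1$, together with alternating cycles. Closing these paths with the crossing edges should produce an odd closed walk in $X$, and symmetrically one in $\comp X$. Shortening each walk gives disjoint odd cycles $C_1\subseteq G[X]$ and $C_2\subseteq G[\comp X]$. The edges of $M_0$ and of $N_i$ outside them should then assemble into a perfect matching of $G-(V(C_1)\cup V(C_2))$.

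\emph{Main obstacle.} The hardest step is the last one in the necessity direction: guaranteeing that the complement of $V(C_1)\cup V(C_2)$ is matchable, and not merely that two odd cycles exist. I would first try to use the minimality of $k$, which forces $k=3$ in the relevant case, together with a parity count on alternating paths to pin down the cycles. If that fails, the fallback is polyhedral. By Edmonds' theorem, if $FPM(G)$ has only integral vertices, then $x(\partial(S))\ge1$ is implied for all odd $S$. Combining this with $x(C)=1$ on the face spanned by the contraction matchings would force every perfect matching onto that face, which means $C$ is tight, a contradiction. So a nonintegral vertex exists, and by half-integrality it is exactly the required configuration of $C_1$, $C_2$ and $M'$.
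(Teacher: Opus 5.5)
\textbf{Sufficiency.} Both directions of your proposal have genuine gaps. The first is that $\partial(V(C_1))$ need not be separating. Take the graph on $a,b,c,d,e,f,g,h$ with edges $ab,bc,ca,de,ef,fd,gh,ah,bh,ch,cg,dg,ae,bf$. It has the Hamiltonian cycle $aedfbhgca$ with chords $ab,bc,ef,ah,ch,dg$. Checking the $28$ vertex pairs shows that it is $3$-connected and bicritical, so it is a brick. Moreover, $C_1=abc$, $C_2=def$ and $M'=\{gh\}$ form an odd bicycle. Suppose a perfect matching contains both $cg$ and $ab$. Then $h$, whose neighbours are $a,b,c,g$, is left exposed. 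Hence every perfect matching through $cg$ meets $\partial(\{a,b,c\})$ in three edges, and $\partial(V(C_1))$ is not separating (here $\partial(V(C_2))$ is). No modification ``along $C_1$ and $C_2$'' can repair this. Non-tightness, by contrast, needs no argument, since a brick has no nontrivial tight cuts.

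Your certificate $x_0=\chi^{M'}+\tfrac12(\chi^{E(C_1)}+\chi^{E(C_2)})$ is the right one, but the cut has to be read off the polytope. Let $y\in\mathrm{PM}(G)$ have full support, for example the average of all perfect matchings. Let $z$ be the last point of the segment $[y,x_0]$ that lies in $\mathrm{PM}(G)$; we have $z\ne x_0$ because $x_0(\partial(V(C_1)))=0$. The segment stays inside $\mathrm{FPM}(G)$, so Edmonds' theorem gives an odd set $S$ with $z(\partial(S))=1>x_0(\partial(S))$. Since $x_0(\partial(S))$ is an integer, it equals $0$. Thus $S$ is $V(C_1)$ or $V(C_2)$ together with the ends of some edges of $M'$, and in particular $S$ is nontrivial. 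As $z>0$ (it puts positive weight on $y$) and $z(\partial(S))=1$, every edge lies in a perfect matching meeting $\partial(S)$ exactly once. Hence $\partial(S)$ is a nontrivial separating cut.

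\textbf{Necessity.} Your primary route is only a heuristic. Closing the alternating paths of $G[X]$ with crossing edges produces walks that leave $X$. Nothing guarantees that the leftover edges of $M_0,N_1,N_2$ form a perfect matching of the complement. The polyhedral fallback correctly yields $\mathrm{PM}(G)\ne\mathrm{FPM}(G)$, but its last step fails. A nonintegral vertex of $\mathrm{FPM}(G)$ carries $2m$ disjoint odd cycles for some $m\ge1$, and nothing forces $m=1$, whereas the theorem asserts exactly two.

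The missing idea is to find one odd cycle on each shore. First, non-tightness of $C=\partial(X)$ makes $G/\comp X$ nonbipartite. Suppose it were bipartite with its contraction vertex $w$ in class $A$. Then every vertex of $A-w$ has all its neighbours in $B$, and $|B|=|A-w|+1$, which forces $|M\cap C|=1$ for every perfect matching $M$.

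Next, pick a neighbour $a$ of $w$. A perfect matching of $G/\comp X$ through $aw$ gives a perfect matching $N$ of $G[X]-a$. Grow an $N$-alternating tree from $a$ in $G[X]$. If some edge joins two outer vertices, it closes a blossom $D_1$ with base $v$. Switching $N$ along the even stem from $a$ to $v$ and discarding $E(D_1)$ leaves a perfect matching of $G[X]-V(D_1)$. If no such edge appears, the inner vertices together with $w$ form a barrier of $G/\comp X$ whose odd components are the outer singletons. Barriers of matching covered graphs are stable and leave only odd components, so $G/\comp X$ would then be bipartite, a contradiction.

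The same argument in $G/X$ gives an odd cycle $D_2$ in $G[\comp X]$. The two matchings together form a perfect matching of $G-(V(D_1)\cup V(D_2))$.
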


Here we introduce some basic properties on solid matching covered graphs.

\begin{lemma}[\resultcite{plummer-extendable}]\label{lem:two-connected}
Every matching covered graph of order at least four is 2-connected.
\end{lemma}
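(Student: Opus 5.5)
The statement just before this point is Lemma~\ref{lem:two-connected}: every matching covered graph $G$ of order at least four is 2-connected. The plan is a contradiction argument using a cut vertex and parity.

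Since $G$ is connected by definition, suppose it has a cut vertex $v$. Let $H_1,\dots,H_k$ ($k\ge 2$) be the components of $G-v$. Fix a perfect matching $M$ of $G$ and let $u$ be the partner of $v$ in $M$, say $u\in V(H_1)$. The restriction of $M$ to each $H_i$ with $i\ge 2$ is a perfect matching of $H_i$, so $|V(H_i)|$ is even for $i\ge2$. Likewise $H_1-u$ is perfectly matched by $M$, so $|V(H_1)|$ is odd.

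Because $H_2$ is a component of $G-v$ and $G$ is connected, some edge $e=vw$ has $w\in V(H_2)$. As $G$ is matching covered, $e$ lies in some perfect matching $M'$. Then $M'$ has no edge joining $H_1$ to $v$ or to another component. Hence $M'$ restricted to $H_1$ is a perfect matching of $H_1$, which contradicts $|V(H_1)|$ being odd. So $G$ has no cut vertex.

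It remains to rule out the degenerate cases in the definition of 2-connectedness. A graph with at least three vertices and no cut vertex is 2-connected, and here $|V(G)|\ge4$, so $G$ is 2-connected. The argument is essentially routine. The only step needing care is the parity bookkeeping: the component containing $v$'s partner is forced to be odd, while a perfect matching through an edge into a different component forces every other component, including that one, to be even. The order assumption is only used to exclude $K_2$ (with parallel edges), which is matching covered but not 2-connected.
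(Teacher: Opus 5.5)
Your proof is correct and complete. Taking the partner of the cut vertex $v$ in one perfect matching shows that the component of $G-v$ containing it is odd. A perfect matching through an edge from $v$ into a different component forces that same component to be even, which is the contradiction. From there, connected, no cut vertex and at least three vertices gives $2$-connectivity.

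The paper gives no proof of this lemma and only cites Plummer. There it is the case $n=1$ of the theorem that every $n$-extendable graph is $(n+1)$-connected: a matching covered graph of order at least four is exactly a connected $1$-extendable graph, which is why the order hypothesis appears. Your direct parity argument is a self-contained proof of that special case and avoids appealing to the general extendability result. Your remark is also accurate: the order bound is needed only to exclude $K_2$, possibly with parallel edges, since the cut-vertex argument itself never uses it.
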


\begin{theorem}[\resultcite{clm-characteristic}]\label{thm:bipartite-solid}
Every bipartite matching covered graph is solid.
\end{theorem}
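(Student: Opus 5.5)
The plan is to prove directly that every separating cut $C=\partial(X)$ of a bipartite matching covered graph $G$ is tight. The argument uses only one of the two $C$-contractions, namely $G/\comp X$, in which $\comp X$ is shrunk to a single vertex $\comp x$. Let $(A,B)$ be the bipartition of $G$. Because $G$ has a perfect matching, $|A|=|B|$. The quantity to track is the imbalance
\[
d=|X\cap A|-|X\cap B|.
\]

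\textbf{Step 1: $|d|=1$.} Since $C$ is separating, $G/\comp X$ is matching covered, so it has a perfect matching $N$. Exactly one edge of $N$ is incident with $\comp x$; call it $e$, and let $v$ be its end in $X$. Then $N-e$ is a perfect matching of $G[X-v]$. The graph $G[X-v]$ is bipartite with classes $(X-v)\cap A$ and $(X-v)\cap B$, so these classes have equal size. Hence $d=1$ if $v\in A$ and $d=-1$ if $v\in B$. By symmetry we may assume $d=1$.

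\textbf{Step 2: every edge of $C$ has its end in $X$ lying in $A$.} Take any $f\in C$ and let $u$ be its end in $X$. In $G/\comp X$, $f$ is an edge at $\comp x$, and because $G/\comp X$ is matching covered, $f$ lies in some perfect matching $N_f$ of $G/\comp X$. Removing $f$ leaves a perfect matching of $G[X-u]$, so the two colour classes of $X-u$ have equal size. If $u\in B$, those classes would differ by $d+1=2$, which is a contradiction. So $u\in A$.

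\textbf{Step 3: $C$ is tight.} Let $M$ be any perfect matching of $G$. Every edge of $M$ inside $X$ covers one vertex of $X\cap A$ and one of $X\cap B$. By Step 2, every edge of $M\cap C$ covers a vertex of $X\cap A$ and no vertex of $X\cap B$. Counting the vertices of $X$ in each class, the vertices of $X\cap B$ are exactly those matched inside $X$, so
\[
|M\cap C| = |X\cap A|-|X\cap B| = d = 1.
\]
Since $M$ was arbitrary, $C$ is tight, and therefore $G$ is solid.

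The main point to get right is Step 2: applying matching-coveredness of the contraction to \emph{every} edge of $C$ is what forces all cut edges onto a single colour class on the $X$ side. Once that holds, the counting in Step 3 is routine.
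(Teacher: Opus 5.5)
Your proof is correct and complete. The paper gives no proof of this statement; it quotes the result from Carvalho, Lucchesi and Murty and uses it as a black box, so there is no in-paper argument to compare against.

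Your argument is self-contained and uses only the definitions:
\begin{itemize}
\item Step~1 shows the colour imbalance of the shore $X$ is $\pm1$.
\item Step~2 uses matching-coveredness of $G/\comp X$ at each edge of $C$ to put every cut edge on the majority class on the $X$ side.
\item Step~3 then counts $|M\cap C|=|X\cap A|-|X\cap B|=1$ for every perfect matching $M$.
\end{itemize}

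A common alternative goes through the lemma that $C$ is separating if and only if every edge lies in a perfect matching meeting $C$ exactly once. One combines this with the identity $|M\cap C|=d+2b$, where $b$ counts edges of $M$ joining $X\cap B$ to $\comp X\cap A$, and concludes that no such edges exist.

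Your route avoids that lemma and proves slightly more. Only the contraction $G/\comp X$ needs to be matching covered; in fact it suffices that each edge of $C$ extends to a perfect matching of $G/\comp X$. So in a bipartite matching covered graph, a cut is tight as soon as one of its two contractions is matching covered.

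The observation $|A|=|B|$ is never used and could be dropped.
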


\begin{lemma}\label{lem:simplification}
Let $G$ be a matching covered graph, and let $X$ be a nonempty proper subset
of $V(G)$. Then the following statements hold.
\begin{enumerate}
\item[(i)] The underlying simple graph $\operatorname{Sim}(G)$ is matching
covered.
\item[(ii)] $\partial_G(X)$ is a tight cut if and only if
$\partial_{\operatorname{Sim}(G)}(X)$ is a tight cut.
\item[(iii)] $\partial_G(X)$ is a separating cut if and only if
$\partial_{\operatorname{Sim}(G)}(X)$ is a separating cut. Consequently, $G$ is
solid if and only if its underlying simple graph is solid.
\end{enumerate}
\end{lemma}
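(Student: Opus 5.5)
The plan is to prove the three statements in order, with (i) feeding into (ii) and both feeding into (iii). Throughout, the basic observation is that $G$ and $H:=\operatorname{Sim}(G)$ have the same vertex set. Moreover, every perfect matching of $H$ lifts to a perfect matching of $G$ by choosing one edge from each parallel class. Conversely, every perfect matching of $G$ projects to a perfect matching of $H$, because a matching never uses two parallel edges. This projection preserves, for each $X$, the number of matching edges in $\partial(X)$, since parallel edges have the same ends and hence lie in $\partial(X)$ together or not at all.

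For (i): $H$ is connected and has at least two vertices because $G$ does. Each edge $e$ of $H$ represents some parallel class of $G$; pick an edge $e'$ of that class. A perfect matching of $G$ containing $e'$ projects to a perfect matching of $H$ containing $e$. For (ii): the projection is a surjection from perfect matchings of $G$ onto those of $H$ that preserves $|M\cap\partial(X)|$. Hence $|M\cap\partial_G(X)|=1$ for all $M$ if and only if the same holds in $H$.

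For (iii), the key is that contraction commutes with simplification up to parallel edges. Concretely, $\operatorname{Sim}(G/X)=\operatorname{Sim}(H/X)$: the two graphs have the same vertex set, and two vertices are adjacent in one exactly when they are adjacent in the other. This holds because adjacency in a contraction depends only on which vertex pairs of the original graph are adjacent, and $G$ and $H$ have the same adjacent pairs.

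The step I expect to need the most care is the following claim: a graph $K$ with at least two vertices is matching covered if and only if $\operatorname{Sim}(K)$ is. The forward direction is (i). For the converse, take an edge $f$ of $K$ and let $f_0$ be the edge of $\operatorname{Sim}(K)$ representing its parallel class. A perfect matching of $\operatorname{Sim}(K)$ containing $f_0$ lifts to a perfect matching of $K$ in which $f$ itself is chosen from that class. Connectivity is the same for $K$ and $\operatorname{Sim}(K)$.

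Applying the claim to $K=G/X$ and to $K=H/X$, both are matching covered if and only if their common simplification is. The same argument applies to $G/\comp X$ and $H/\comp X$. Therefore $\partial_G(X)$ is separating exactly when $\partial_H(X)$ is. Note that this also needs $H$ to be matching covered, which is (i), because separating cuts are defined only in matching covered graphs.

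Finally, $G$ is solid if and only if every separating cut of $G$ is tight. By (ii) and (iii), this translates verbatim, cut by cut, into the statement that every separating cut of $H$ is tight, i.e. that $H$ is solid.
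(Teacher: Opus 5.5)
Your proposal is correct and follows the paper's proof essentially step by step. For (i) and (ii) you project and lift perfect matchings between $G$ and $\operatorname{Sim}(G)$, and for (iii) you observe that $G/S$ and $\operatorname{Sim}(G)/S$ have the same underlying simple graph, together with the fact that a graph is matching covered if and only if its underlying simple graph is.
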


\begin{proof}
For (i), observe first that a graph is connected if and only if its underlying
simple graph is connected. Let $e'$ be an edge of $\operatorname{Sim}(G)$ with end vertices $u$
and $v$, and choose an edge $e$ of $G$ with the same end vertices. Since $G$ is
matching covered, $e$ lies in a perfect matching $M$ of $G$. For each edge
$f\in M$,
let $f'$ be the unique edge of $\operatorname{Sim}(G)$ with the same end vertices
as $f$, and set $M'=\{f':f\in M\}$. Then $M'$ is a perfect matching of
$\operatorname{Sim}(G)$ containing $e'$. Hence $\operatorname{Sim}(G)$ is
matching covered. Moreover, let $H$ be a graph such that
$\operatorname{Sim}(H)$ is matching covered, and let $e$ be any edge of $H$.
Let $e'$ be the edge of $\operatorname{Sim}(H)$ with the same end vertices as
$e$, and let $N'$ be a perfect matching containing $e'$. Replacing $e'$ by $e$
and each other edge of $N'$ by an edge of $H$ with the same end vertices yields
a perfect matching of $H$ containing $e$. Thus, for any graph $H$, the graph
$H$ is matching covered if and only if its underlying simple graph is matching
covered.

We now prove (ii). Let $M$ be a perfect matching of $G$, and let $M'$ be the
set of edges of $\operatorname{Sim}(G)$ having the same end vertices as the
edges of $M$. Then $M'$ is a perfect matching of $\operatorname{Sim}(G)$ and
\[
 |M\cap\partial_G(X)|
 =|M'\cap\partial_{\operatorname{Sim}(G)}(X)|.
\]
Conversely, for every perfect matching $N'$ of $\operatorname{Sim}(G)$, choose
in $G$ one edge with the same end vertices as each edge of $N'$, and let $N$ be
the set of chosen edges. Then $N$ is a perfect matching of $G$ and
\[
 |N\cap\partial_G(X)|
 =|N'\cap\partial_{\operatorname{Sim}(G)}(X)|.
\]
It follows that $\partial_G(X)$ is a tight cut if and only if
$\partial_{\operatorname{Sim}(G)}(X)$ is a tight cut.

For each $S\in\{X,\comp X\}$, the graphs $G/S$ and
$\operatorname{Sim}(G)/S$ have isomorphic underlying simple graphs. Hence
$G/S$ is matching covered if and only if $\operatorname{Sim}(G)/S$ is matching
covered.
It follows that $\partial_G(X)$ is a separating cut if and only if
$\partial_{\operatorname{Sim}(G)}(X)$ is a separating cut. Therefore, $G$ is solid if and only if
$\operatorname{Sim}(G)$ is solid. This proves the result in (iii).
\end{proof}

%%%%%%%%%%%%%%%%%%%%%%%%%%%%%%%%%%%%%%%%%%%%%%%%%%%%%%%%%%%%%%%%%%%%%%%%%
\section{The maximum size of simple solid bricks}
%%%%%%%%%%%%%%%%%%%%%%%%%%%%%%%%%%%%%%%%%%%%%%%%%%%%%%%%%%%%%%%%%%%%%%%%%

In this section, we first present some results on the vertex degree of simple solid bricks. Using these results, we prove
Theorem~\ref{thm:brick-bound} to show that the maximum size of simple solid bricks of order $2n$ is $n^2$. This means that Conjecture \ref{conj:LM} is true for simple solid bricks.

Let $G$ be a simple solid brick of order $2n$ and let $x$ and $y$
two distinct vertices of $G$. By
Theorem~\ref{thm:brick-characterization}, the graph $G-x-y$ has a perfect
matching $M$. We define the symbol $\eps$ as
below
\begin{equation}\label{eq:epsilon}
\eps=
\begin{cases}
1, & \text{if } xy\in E(G),\\
0, & \text{otherwise}.
\end{cases}
\end{equation}
For $v\in\{x,y\}$, define $A_v=N_G(v)\cap V(G-x-y)$. Throughout this section, the symbols $s_v$, $r_v$ and
$z_v$ denote, respectively, the numbers of edges of $M$ having both end
vertices in $A_v$, neither end vertex in $A_v$ and exactly one end vertex in
$A_v$.

Now we show an identity on the vertex degree of simple solid bricks.

\begin{lemma}\label{lem:identity}
Let $G$ be a simple solid brick of order $2n$, and let $x$ and $y$ be two distinct
vertices of $G$. Then, for
each $v\in\{x,y\}$,
\[
 s_v-r_v=d(v)-\eps-(n-1),
\]
where $\eps$ is given by \eqref{eq:epsilon}.
\end{lemma}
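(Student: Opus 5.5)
The identity is a pure double count: neither solidity nor the brick property is needed beyond guaranteeing that the perfect matching $M$ of $G-x-y$ exists, which we already have from Theorem~\ref{thm:brick-characterization} (bricks are bicritical). The plan is to express $d(v)-\eps$ and $n-1$ in terms of $s_v,r_v,z_v$ and subtract.

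First I count $d(v)$. Since $G$ is simple, $d(v)=|N_G(v)|$. The neighbours of $v$ are the vertices of $A_v=N_G(v)\cap V(G-x-y)$, together with the other vertex of $\{x,y\}$ exactly when $xy\in E(G)$. Since $G$ is loopless, $v\notin N_G(v)$. Hence
\[
 d(v)=|A_v|+\eps .
\]

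Next I count $|A_v|$ through $M$. Because $M$ is a perfect matching of $G-x-y$, every vertex of $A_v$ is covered by exactly one edge of $M$. An edge with both ends in $A_v$ contributes two vertices, and an edge with exactly one end in $A_v$ contributes one. Thus
\[
 |A_v|=2s_v+z_v .
\]
Moreover, every edge of $M$ falls into exactly one of the three classes, and $|M|=(2n-2)/2=n-1$. Therefore
\[
 s_v+r_v+z_v=n-1 .
\]

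Combining the three displays gives
\[
 d(v)-\eps-(n-1)=(2s_v+z_v)-(s_v+r_v+z_v)=s_v-r_v ,
\]
which is the claimed identity. There is no real obstacle here. The only point needing care is the first step, where simplicity of $G$ is what lets $d(v)$ equal the number of neighbours, so that $d(v)=|A_v|+\eps$.
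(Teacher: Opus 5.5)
Your proof is correct and follows the paper's argument: both count $|M|=n-1=s_v+r_v+z_v$ and $|A_v|=2s_v+z_v=d(v)-\eps$, then subtract. Your explicit remark that simplicity and looplessness give $d(v)=|A_v|+\eps$ justifies a step the paper states without comment.
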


\begin{proof}
Since $G$ is a matching covered graph of order $2n$, the graph $G-x-y$ has
order $2n-2$. Thus the perfect matching $M$ of $G-x-y$ has $n-1$ edges, and
hence
\[
 s_v+z_v+r_v=n-1.
\]
Since $M$ covers all vertices in $A_v$, a simple counting argument shows that
\[
 2s_v+z_v=|A_v|=d(v)-\eps.
\]
Subtracting $s_v+z_v+r_v=n-1$ from $2s_v+z_v=d(v)-\eps$ yields the asserted
equality.
\end{proof}

\begin{lemma}\label{lem:common-edge}
Let $G$ be a simple solid brick of order $2n$, let $x$ and $y$ be two distinct
vertices of $G$, and let $M$ be a perfect matching of $G-x-y$. If $s_x>0$
and $s_y>0$, then $s_x=s_y=1$, and the unique edge of $M$ with
both end vertices in $A_x$ is also the unique edge of $M$ with both end
vertices in $A_y$.
\end{lemma}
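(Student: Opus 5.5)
The natural tool is Theorem~\ref{thm:odd-bicycle}: since $G$ is solid, it has no two disjoint odd cycles $C_1,C_2$ such that $G-(V(C_1)\cup V(C_2))$ has a perfect matching. The plan is to show that any configuration violating the conclusion produces two such cycles. Triangles through $x$ and through $y$ are the obvious candidates, and the leftover edges of $M$ will serve as the perfect matching of the complement.

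First suppose $M$ has two distinct edges $e=ab$ and $f=cd$ with $a,b\in A_x$ and $c,d\in A_y$. Then $xab$ and $ycd$ are triangles in $G$, because $xa,xb,ab,yc,yd,cd\in E(G)$. They are vertex-disjoint: $x\ne y$, neither $x$ nor $y$ is covered by $M$, and $e\ne f$ are disjoint edges of a matching. Deleting the six vertices $x,y,a,b,c,d$ leaves the graph $G-x-y-V(e)-V(f)$, and $M\setminus\{e,f\}$ is a perfect matching of it. By Theorem~\ref{thm:odd-bicycle}, $G$ would then be nonsolid, which is a contradiction. Hence every edge of $M$ with both ends in $A_x$ coincides with every edge of $M$ with both ends in $A_y$.

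Now assume $s_x>0$ and $s_y>0$. If $s_x\ge2$, take two distinct edges $e_1,e_2$ of $M$ inside $A_x$, and any edge $f$ of $M$ inside $A_y$. At least one of $e_1,e_2$ differs from $f$, so the previous paragraph gives a contradiction. Therefore $s_x=1$, and by the symmetric argument $s_y=1$. Applying the first step to the unique edge inside $A_x$ and the unique edge inside $A_y$ shows that these two edges are equal, which is the claim.

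I do not expect any real obstacle here. The only points to check carefully are the disjointness of the two triangles, which uses that $x,y\notin V(M)$, and the observation that $M$ minus two edges perfectly matches the remaining vertices, so that Theorem~\ref{thm:odd-bicycle} applies directly. Whether $xy$ is an edge plays no role in the argument.
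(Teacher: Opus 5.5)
Your proof is correct and takes essentially the same route as the paper. Two distinct edges of $M$, one inside $A_x$ and one inside $A_y$, give disjoint triangles $xab$ and $ycd$. Removing them leaves $M\setminus\{e,f\}$ as a perfect matching, which contradicts Theorem~\ref{thm:odd-bicycle}. You also spell out two points the paper leaves implicit: why the triangles are disjoint, and how $s_x=s_y=1$ follows via the case $s_x\ge 2$.
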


\begin{proof}
Let $ab$ be an edge of $M$ with both end vertices in $A_x$, and let $pq$ be an
edge of $M$ with both end vertices in $A_y$. If $ab\ne pq$, let $C_1=xabx$ and
$C_2=ypqy$. These are
disjoint triangles, and $M-\{ab,pq\}$ is a perfect matching of
$G-(V(C_1)\cup V(C_2))$. By
Theorem~\ref{thm:odd-bicycle}, $G$ is a nonsolid brick, a contradiction. Thus every edge of $M$ with both end vertices
in $A_x$ and every edge of $M$ with both end vertices in $A_y$ must be the
same edge. Hence
$s_x=s_y=1$.
\end{proof}

\begin{corollary}\label{cor:degree-pairs}
Let $G$ be a simple solid brick of order $2n$, let $x$ and $y$ be two distinct
vertices of $G$, and let $M$ be a perfect matching of $G-x-y$. Then the
following statements hold.
\begin{enumerate}
\item[(i)] If $xy\notin E(G)$ and $d(x),d(y)\ge n$, then
$d(x)=d(y)=n$.
\item[(ii)] If $xy\in E(G)$ and $d(x),d(y)\ge n+1$, then
$d(x)=d(y)=n+1$.
\end{enumerate}
\end{corollary}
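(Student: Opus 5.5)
We combine Lemma~\ref{lem:identity} and Lemma~\ref{lem:common-edge}. Fix the perfect matching $M$ of $G-x-y$, which exists because $G$ is bicritical (Theorem~\ref{thm:brick-characterization}). The plan is to show that the degree hypotheses force $s_x,s_y>0$. Lemma~\ref{lem:common-edge} then gives $s_x=s_y=1$, and feeding this back into Lemma~\ref{lem:identity} will show the degrees cannot exceed the stated values.

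For (i), we have $\eps=0$. Lemma~\ref{lem:identity} gives, for $v\in\{x,y\}$,
\[
 s_v=r_v+d(v)-(n-1)\ge d(v)-(n-1)\ge 1,
\]
since $r_v\ge 0$ and $d(v)\ge n$. Thus $s_x,s_y>0$, and Lemma~\ref{lem:common-edge} yields $s_x=s_y=1$. Substituting back gives $1=r_v+d(v)-n+1$, that is, $d(v)=n-r_v\le n$. Together with $d(v)\ge n$, this forces $d(v)=n$ (and incidentally $r_v=0$) for both $v=x$ and $v=y$.

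For (ii), we have $\eps=1$. The identity becomes $s_v=r_v+d(v)-n\ge d(v)-n\ge 1$ when $d(v)\ge n+1$. As before, Lemma~\ref{lem:common-edge} gives $s_v=1$, hence $d(v)=n+1-r_v\le n+1$, and so $d(v)=n+1$.

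There is no real obstacle here. The only point needing care is to check that Lemma~\ref{lem:common-edge} is applied to the same matching $M$ used to define $s_x$ and $s_y$ in Lemma~\ref{lem:identity}. It is, since both lemmas are stated for an arbitrary perfect matching $M$ of $G-x-y$.
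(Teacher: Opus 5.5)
Your proposal is correct and follows essentially the same route as the paper. The degree hypotheses, fed into Lemma~\ref{lem:identity}, force $s_x,s_y>0$; Lemma~\ref{lem:common-edge} then gives $s_x=s_y=1$, and substituting back yields $d(v)=n-r_v\le n$ in (i) and $d(v)=n+1-r_v\le n+1$ in (ii).
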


\begin{proof}
It follows from Lemma~\ref{lem:identity} and the hypotheses of (i) that
\[
 s_x-r_x=d(x)-(n-1)\ge1,
\]
and
\[
 s_y-r_y=d(y)-(n-1)\ge1.
\]
Thus $s_x,s_y>0$. By Lemma~\ref{lem:common-edge}, $s_x=s_y=1$, and hence
\[
 d(x)-(n-1)=1-r_x\le1,
\]
and
\[
 d(y)-(n-1)=1-r_y\le1.
\]
Combining the above inequalities, we obtain that
$d(x)=d(y)=n$. This proves part (i).

Using Lemma~\ref{lem:identity} together with the hypotheses of (ii), we have
\[
 s_x-r_x=d(x)-n\ge1,
\]
and
\[
 s_y-r_y=d(y)-n\ge1.
\]
Consequently, $s_x,s_y>0$. Applying Lemma~\ref{lem:common-edge} gives
$s_x=s_y=1$, and hence
\[
 d(x)-n=1-r_x\le1,
\]
and
\[
 d(y)-n=1-r_y\le1.
\]
Combining the above inequalities, we get that
$d(x)=d(y)=n+1$. This proves part (ii).
\end{proof}

We are now ready to prove Theorem~\ref{thm:brick-bound}.

\begin{proof}[\textbf{\upshape Proof of Theorem~\ref{thm:brick-bound}}]
Let $m=|E(G)|$ and suppose to the contrary that $m>n^2$. It follows that
\begin{equation}
 2m\ge2n^2+2. \label{eq:brick-lower}
\end{equation}
If $\Delta(G)\le n$, then $2m\le2n^2$, contrary to
\eqref{eq:brick-lower}. Hence $\Delta(G)\ge n+1$. In the following, we
consider two cases, depending on the maximum degree of $G$.

\smallskip
\noindent\textbf{\upshape Case 1: $\Delta(G)=n+1$.}
For $v\in V(G)$, let
\[
 U=\{v:d(v)=n+1\},\qquad
 R=\{v:d(v)=n\},\qquad
 L=V(G)-(U\cup R),
\]
and let $k=|U|$, $r=|R|$ and $\ell=|L|$. By
Corollary~\ref{cor:degree-pairs}(i), any two distinct vertices of $U$ are
adjacent; if not, they both would have degree $n$, contrary to the definition
of $U$. Thus the subgraph $G[U]$ induced by $U$ is a complete graph. Similarly,
every vertex of $U$ is adjacent
to every vertex of $R$. Consequently, each vertex of $U$ has $k-1+r$
neighbours in $U\cup R$. Since the degree of each vertex in $U$ is $n+1$, we have
$k-1+r\le n+1$, and hence
\begin{equation}
 k+r\le n+2. \label{eq:kr}
\end{equation}
Set $t=n+2-k-r$. By \eqref{eq:kr}, $t$ is a nonnegative integer, and each
vertex of $U$ has exactly $t$ neighbours in $L$.
Every vertex of $U$ has degree $n+1$, every vertex of $R$ has degree
$n$, and every vertex of $L$ has degree at most $n-1$. Then it follows from
\eqref{eq:brick-lower} that
\begin{equation}
 2n^2+2\le2m\le k(n+1)+rn+\ell(n-1)=2n^2+k-\ell.
 \label{eq:degree-sum}
\end{equation}
It leads to that
\begin{equation}
 k-\ell\ge2. \label{eq:kell}
\end{equation}
Since $U$, $R$ and $L$ partition $V(G)$, we have $k+r+\ell=2n$. Hence
$\ell=2n-k-r=n-2+(n+2-k-r)=n-2+t$ and $k=n+2-t-r$. It follows from these two
equalities and \eqref{eq:kell} that $2t+r\le2$. Since $t$ and $r$ are nonnegative integers,
\(
 (t,r)\in\{(0,2),(0,1),(0,0),(1,0)\}.
\)
Following this, the corresponding triples $(k,r,\ell)$ are
\begin{equation}
 (k,r,\ell)\in\{(n,2,n-2),(n+1,1,n-2),(n+2,0,n-2),(n+1,0,n-1)\}.
 \label{eq:triples}
\end{equation}

For the first three triples in \eqref{eq:triples}, $k+r=n+2$, and
$t=0$. Thus no edge joins a vertex of $U$ to a vertex of $L$. Since $n\ge4$,
the sets $U$ and $L$ are nonempty. Thus $G-R$ is
disconnected. As $|R|\le2$, this contradicts the fact that $G$ is 3-connected.

For the fourth triple $(k,r,\ell)=(n+1,0,n-1)$ in \eqref{eq:triples}, it follows from
\eqref{eq:degree-sum} that $2n^2+2\le2m\le2n^2+k-\ell=2n^2+2$. Hence equality
holds throughout in \eqref{eq:degree-sum}.
Together with the facts that $G[U]$ is a complete graph and
$k=n+1$, this yields
$m=n^2+1$, $G[U]\cong K_{n+1}$, and $d(v)=n-1$ for $v\in L$.
Since every vertex of $U$ has exactly $t$ neighbours in $L$ and
$t=1$, we have $|E[U,L]|=|U|=n+1$. Then,
\[
 \sum_{v\in L} d(v)=(n-1)^2=2|E(G[L])|+(n+1).
\]
It follows that $|E(G[L])|=\frac{n(n-3)}{2}=\frac{(n-1)(n-2)}{2}-1$.
Hence $G[L]$ is obtained from $K_{n-1}$ by
deleting one edge. Let $a$ and $b$ denote the two nonadjacent vertices of
$G[L]$, and let $G[L]+ab$ denote the graph obtained from $G[L]$ by adding the
edge $ab$. Then
\begin{equation}
 G[L]+ab\cong K_{n-1}. \label{eq:L-structure}
\end{equation}
Since $d(v)=n-1$ for $v\in L$ and $R$ is empty, the vertices $a$
and $b$ each have two neighbours in $U$, and every vertex of $L-\{a,b\}$ has
one neighbour in $U$.  Now, suppose that
$N_G(a)\cap U=\{u_1,u_2\}$ and
$N_G(b)\cap U=\{v_1,v_2\}$.
We consider the following three cases separately.

Suppose first that $n=4$. Then $(k,r,\ell)=(5,0,3)$, and $t=1$.
Since $\ell=3$, there is a third vertex $c$ in $L$ distinct from $a$ and $b$.
Since every vertex of $U$ has exactly one neighbour in $L$, no two distinct
vertices of $L$ have a common neighbour in $U$. Let $C_1=au_1u_2a$ and
$C_2=bv_1v_2b$. Then they are disjoint triangles. The graph
$G-(V(C_1)\cup V(C_2))$ has exactly two vertices, $c$ and $w$, where $w$ is the
unique neighbour of $c$ in $U$. Hence the edge $cw$ is a perfect matching of
$G-(V(C_1)\cup V(C_2))$.
By Theorem~\ref{thm:odd-bicycle}, this contradicts the hypothesis that
$G$ is a solid brick.

Suppose next that $n\ge6$ is even. Then $|U|\ge5$ and $|L|\ge5$. Let $c$ and $d$ be two distinct vertices
of $L-\{a,b\}$. Since $G[L]$ is obtained from $K_{n-1}$ by deleting $ab$,
$acda$ is a triangle, denoted by $C_1$. As $G[U]\cong K_{n+1}$, let $C_2$ be any triangle
in $G[U]$. These two triangles are disjoint. In
$G-(V(C_1)\cup V(C_2))$, the remaining vertices of $U$ induce $K_{n-2}$, and the remaining vertices of $L$ induce $K_{n-4}$. Thus, the graph
$G-(V(C_1)\cup V(C_2))$ is matchable.
This contradicts the hypothesis that $G$ is a solid brick.

Finally, suppose that $n\ge5$ is odd. Then $|U|\ge6$ and $|L|\ge4$.
Let $T_U$ and $T_L$ be triangles in $G[U]-u_1$ and $G[L]-a$, respectively,
where $u_1\in N_G(a)\cap U$.
Since $G[U]\cong K_{n+1}$, the graph
$G[U]-(V(T_U)\cup\{u_1\})$ is a complete graph on $n-3$ vertices.
Since $G[L]$ is obtained from
$K_{n-1}$ by deleting $ab$, the graph
$G[L]-(V(T_L)\cup\{a\})$ is a complete graph on $n-5$ vertices.
 Let $M_U$ and $M_L$
be perfect matchings of these two graphs, respectively, where $M_L$ is empty
when $n=5$. Then
$M_U\cup M_L\cup\{u_1a\}$ is a perfect matching of
$G-(V(T_U)\cup V(T_L))$.
We deduce that $G$ is not solid, a contradiction. We conclude that Case~1 is
impossible.

\smallskip
\noindent\textbf{\upshape Case 2: $\Delta(G)\ge n+2$.}
Choose a vertex $x$ such that
\[
 d(x)=n+h,\qquad h\ge2.
\]
Let $A=N_G(x)$ be the set of neighbours of $x$ in $G$, and let
$B=V(G)-(A\cup\{x\})$ be the set of all vertices of $G$ other than $x$
that are not adjacent to $x$.
Since $d(x)=n+h$ and $G$ has order $2n$, we have
$|A|=n+h$ and $|B|=n-h-1$.
We assume that $A$ contains a vertex $v$ of degree at least $n+1$.
Corollary~\ref{cor:degree-pairs}(ii) would imply that $d(x)=n+1$, a
contradiction. Thus every vertex of $A$ has degree at most $n$. Similarly, if
some vertex $v\in B$ has degree at least $n$, then
Corollary~\ref{cor:degree-pairs}(i) would imply that $d(x)=n$, a contradiction.
Hence every vertex of $B$ has degree at most $n-1$.
Define
\[
 D=\sum_{v\in A}(n-d(v))+
   \sum_{v\in B}(n-1-d(v))\ge0.
\]
By adding up the degrees of all vertices of $G$ and using the definition of $D$, we
obtain
\begin{equation}
 \begin{aligned}
  2m
  &=d(x)+\sum_{v\in A}d(v)+\sum_{v\in B}d(v)\\
  &=d(x)+n|A|+(n-1)|B|-D\\
  &=n+h+n(n+h)+(n-1)(n-h-1)-D\\
  &=2n^2+2h-n+1-D.
 \end{aligned}
 \label{eq:defect-D}
\end{equation}
From \eqref{eq:defect-D} and \eqref{eq:brick-lower}, we
conclude that $0\le D\le2h-n-1$, and hence that
$h\ge\frac{n+1}{2}$. Since $n\ge4$, we have $h\ge3$.

Let $A_0=\{v\in A:d(v)=n\}$ denote the set of vertices in $A$ that have
degree $n$ in $G$. For every $v\in A-A_0$, we have $d(v)\le n-1$, and hence
$n-d(v)\ge1$. Therefore, by the definition of $D$,
\[
 |A-A_0|\le\sum_{v\in A}(n-d(v))\le D.
\]
Since $G$ is a simple graph of order $2n$, $n+h=d(x)\le2n-1$, and hence
$h\le n-1$. Since $|A|=n+h$, $|A-A_0|\le D$, and $D\le2h-n-1$, it
follows that
\[
 \begin{aligned}
  |A_0|
  &=|A|-|A-A_0|\\
  &\ge n+h-D\\
  &\ge n+h-(2h-n-1)\\
  &=2n-h+1\\
  &\ge n+2.
 \end{aligned}
\]
Choose $y\in A_0$; then $d(y)=n$ and $xy\in E(G)$. Hence $\eps=1$ by
\eqref{eq:epsilon}.
Let $M$ be a perfect matching of $G-x-y$.
By Lemma~\ref{lem:identity}, we obtain
\[
 s_x-r_x=(n+h)-1-(n-1)=h,\]
 and
 \[s_y-r_y=n-1-(n-1)=0.
\]
Since $h\ge3$ and $r_x\ge0$, we have $s_x=h+r_x\ge3$. Thus $M$ has at
least three edges with both end vertices adjacent to $x$. If $s_y>0$, then,
by Lemma~\ref{lem:common-edge}, we would have $s_x=1$, a contradiction.
Therefore
\begin{equation}
 s_y=r_y=0. \label{eq:sy-zero}
\end{equation}

Let $A_y=N_G(y)\cap V(G-x-y)=N_G(y)-\{x\}$, which is the set of neighbours of $y$
that belong to $V(G-x-y)$. Let $Q=V(G-x-y)-A_y$. It denotes the set of
vertices of $G-x-y$ that are not adjacent to $y$. Since $d(y)=n$ and
$xy\in E(G)$, the set $A_y$ has $n-1$ vertices. Since $G-x-y$ has $2n-2$
vertices, the set $Q$
also has $n-1$ vertices. By \eqref{eq:sy-zero}, no edge of $M$ has both endvertices in $A_y$, and no edge of $M$ has both endvertices in $Q$. Hence
every edge of $M$ has one endvertex in $A_y$ and the other endvertex in $Q$.

We claim that $Q$ is a stable set. Suppose that $q_1q_2\in E(G[Q])$. For
$i=1,2$, let $p_iq_i$ be the edge of $M$ incident with $q_i$. Then $p_i\in A_y$. Thus
\(
 yp_1q_1q_2p_2y
\)
is a cycle of length five, denoted by $C_1$. Since $s_x\ge3$, $M$ contains an edge
$p_3q_3$, distinct from $p_1q_1$ and $p_2q_2$, with both end vertices adjacent
to $x$. Adjust notation so that $p_3\in A_y$ and $q_3\in Q$. Let
$C_2=xp_3q_3x$. The triangle $C_2$ is disjoint from $C_1$, and
$M-\{p_1q_1,p_2q_2,p_3q_3\}$ is a perfect matching of
$G-(V(C_1)\cup V(C_2))$.
This contradicts the hypothesis that $G$ is a solid brick. Hence $Q$ is a stable set.

Since $Q$ is a stable set and $y$ is adjacent to no vertex of $Q$, the set
$Q\cup\{y\}$ is a stable set. Let $p$ be a vertex of $A_y$. Then $Q\cup\{y\}$ is a
stable set of $G-x-p$ with $n$ vertices. Since
$n>\frac{2n-2}{2}=\frac{|V(G-x-p)|}{2}$, the graph $G-x-p$ is not
matchable, contrary to the
bicriticality of $G$. Both cases are impossible. This completes the proof of
Theorem~\ref{thm:brick-bound}.
\end{proof}

%%%%%%%%%%%%%%%%%%%%%%%%%%%%%%%%%%%%%%%%%%%%%%%%%%%%%%%%%%%%%%%%%%%%%%%%%%%%%
\section{The maximum size of simple solid matching covered graphs}
%%%%%%%%%%%%%%%%%%%%%%%%%%%%%%%%%%%%%%%%%%%%%%%%%%%%%%%%%%%%%%%%%%%%%%%%%%%%

In this section, we first disprove Conjecture~\ref{conj:LM}. Then we correct the result of Conjecture~\ref{conj:LM}, and we prove that every simple solid
matching covered graph of order $2n$, for $n\ge2$, has at most $n^2+2$ edges. Moreover, we characterize the graphs attaining this bound.

\subsection{Disproof of Conjecture~\ref{conj:LM}}

Let $K_n$ be the complete graph on $n$ vertices, and let $\comp{K}_n$ be
the stable set on $n$ vertices. We now introduce three families of simple solid matching covered graphs, denoted by $A_n$, $B_n$ and $D_n$.
Let

\begin{gather*}
 A_n=\comp{K}_{n-1}\vee\bigl(K_3\cup(n-2)K_1\bigr) \qquad (n\ge2),\\
 B_n=\comp{K}_{n-2}\vee\bigl(2K_3\cup(n-4)K_1\bigr) \qquad (n\ge4)
\end{gather*}
and
\[
 D_n=\comp{K}_{n-2}\vee\bigl(F\cup(n-3)K_1\bigr) \qquad (n\ge3),
\]
where $F=K_1\vee(2K_2)$.
See Figure~\ref{fig:extremal} for the illustrations of $A_4$, $B_4$ and $D_4$.

\begin{figure}[!t]
\centering
\begin{tikzpicture}[
  scale=1.15,transform shape,
  vertex/.style={circle,draw=black,fill=white,inner sep=0pt,
    minimum size=6.2pt,line width=0.55pt},
  joinedge/.style={draw=black,solid,
    line width=0.55pt},
  inner/.style={draw=red,solid,line width=0.95pt},
  part/.style={font=\small}
]
% A_4
\begin{scope}[xshift=-5.25cm]
  \coordinate (AL1) at (-1.15,1.10);
  \coordinate (AL2) at (-1.15,0.00);
  \coordinate (AL3) at (-1.15,-1.10);
  \coordinate (AR1) at (0.34,1.32);
  \coordinate (AR2) at (1.25,0.88);
  \coordinate (AR3) at (0.34,0.44);
  \coordinate (AR4) at (0.88,-0.36);
  \coordinate (AR5) at (0.88,-1.20);
  \foreach \i in {1,2,3}{\foreach \j in {1,2,3,4,5}{\draw[joinedge] (AL\i)--(AR\j);}}
  \draw[inner] (AR1)--(AR2)--(AR3)--cycle;
  \foreach \i in {1,2,3}{\node[vertex] at (AL\i) {};}
  \foreach \i in {1,2,3,4,5}{\node[vertex] at (AR\i) {};}
\end{scope}
% B_4
\begin{scope}
  \coordinate (BL1) at (-1.15,0.68);
  \coordinate (BL2) at (-1.15,-0.68);
  \coordinate (BR1) at (0.34,1.30);
  \coordinate (BR2) at (1.25,0.92);
  \coordinate (BR3) at (0.34,0.54);
  \coordinate (BR4) at (0.34,-0.54);
  \coordinate (BR5) at (1.25,-0.92);
  \coordinate (BR6) at (0.34,-1.30);
  \foreach \i in {1,2}{\foreach \j in {1,2,3,4,5,6}{\draw[joinedge] (BL\i)--(BR\j);}}
  \draw[inner] (BR1)--(BR2)--(BR3)--cycle;
  \draw[inner] (BR4)--(BR5)--(BR6)--cycle;
  \foreach \i in {1,2}{\node[vertex] at (BL\i) {};}
  \foreach \i in {1,2,3,4,5,6}{\node[vertex] at (BR\i) {};}
\end{scope}
% D_4
\begin{scope}[xshift=5.25cm]
  \coordinate (CL1) at (-1.15,0.68);
  \coordinate (CL2) at (-1.15,-0.68);
  \coordinate (CR0) at (0.55,0.00);
  \coordinate (CR1) at (1.22,1.20);
  \coordinate (CR2) at (1.22,0.48);
  \coordinate (CR3) at (1.22,-0.48);
  \coordinate (CR4) at (1.22,-1.20);
  \coordinate (CR5) at (0.24,-1.34);
  \foreach \i in {1,2}{\foreach \j in {0,1,2,3,4,5}{\draw[joinedge] (CL\i)--(CR\j);}}
  \foreach \j in {1,2,3,4}{\draw[inner] (CR0)--(CR\j);}
  \draw[inner] (CR1)--(CR2);
  \draw[inner] (CR3)--(CR4);
  \foreach \i in {1,2}{\node[vertex] at (CL\i) {};}
  \foreach \i in {0,1,2,3,4,5}{\node[vertex] at (CR\i) {};}
\end{scope}
\end{tikzpicture}
\caption{The graphs $A_4$, $B_4$ and $D_4$.}
\label{fig:extremal}
\end{figure}

 We will show that each of $A_n$, $B_n$ and $D_n$ is a simple solid matching covered graph of order $2n$ and size $n^2+2$. Thus these families of graphs disprove
 Conjecture~\ref{conj:LM}. Moreover, for every integer $n\ge2$, we prove that each
 simple solid matching covered graph of order $2n$ has at most $n^2+2$ edges, and
 the graphs attaining the upper bound are exactly the graphs $A_n$, $B_n$ and $D_n$.
 To show these, We introduce some definitions and results.

A subset $B$ of the vertex set of a matchable graph $G$ is a \emph{barrier}
of $G$ if $o(G-B)=|B|$, where $o(G-B)$ denotes the number of odd components
of $G-B$.
If $B$ is a barrier of a matching covered graph $G$ and $K$ is an odd
component of $G-B$, then $\partial(V(K))$ is called a \emph{barrier cut}
associated with $B$.
A \emph{2-separation} of a matching covered graph $G$ is a 2-vertex-cut
$\{u,v\}$ that is not a barrier. As $G$ is matching covered, every component of $G-u-v$ has even order.
Suppose that $H_1$ is the union of a nonempty proper subset of the components
of $G-u-v$, and that $H_2$ is the union of the remaining components. The cuts
$\partial(V(H_1)\cup\{v\})$ and $\partial(V(H_2)\cup\{v\})$ are called
\emph{2-separation cuts}.

\begin{lemma}[\resultcite{tight-cuts}]\label{lem:barrier}
Let $B$ be a nonempty barrier of a matching covered graph $G$. Then $B$ is a
stable set, and every component of $G-B$ is odd.
\end{lemma}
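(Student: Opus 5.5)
The plan is a direct counting argument with perfect matchings. It uses only the definitions of barrier and matching covered graph, together with the parity observation that an odd component cannot be perfectly matched within itself.

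\textbf{Key observation.} Let $M$ be any perfect matching of $G$ and let $K$ be an odd component of $G-B$. Since $|V(K)|$ is odd, $M$ cannot match all vertices of $K$ among themselves. So at least one vertex of $K$ is matched by $M$ to a vertex outside $K$. As $K$ is a component of $G-B$, every edge leaving $K$ ends in $B$, so this partner lies in $B$. Distinct odd components need distinct partners in $B$, because $M$ is a matching. Since there are exactly $o(G-B)=|B|$ odd components, every vertex of $B$ is matched by $M$ into an odd component of $G-B$, and each odd component receives exactly one such $M$-edge. In particular, no $M$-edge has both ends in $B$, and no $M$-edge joins $B$ to an even component.

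\textbf{Stability of $B$.} Suppose $e=b_1b_2$ is an edge with $b_1,b_2\in B$. Since $G$ is matching covered, $e$ lies in some perfect matching $M$. Then $M$ has an edge with both ends in $B$, contradicting the key observation: only $|B|-2$ vertices of $B$ remain to serve the $|B|$ odd components. Hence $B$ is stable.

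\textbf{No even components.} Suppose $K$ is an even component of $G-B$. Since $G$ is connected and $B$ is nonempty, $V(K)\ne V(G)$, so there is an edge $e$ from $K$ to some $b\in B$. Take a perfect matching $M$ containing $e$. Then $b$ is matched into the even component $K$. This leaves at most $|B|-1$ vertices of $B$ for the $|B|$ odd components, again contradicting the key observation. Hence every component of $G-B$ is odd.

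There is no real obstacle. The only point needing care is the key observation itself: the partner of the exposed vertex of an odd component must lie in $B$, and the count $|B|=o(G-B)$ forces a bijection between $B$ and the odd components.
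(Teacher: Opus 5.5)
Your proof is correct and complete. The paper gives no proof of this lemma (it only cites it), and your argument is the standard one. In every perfect matching, the vertices of $B$ are matched bijectively to the $|B|$ odd components of $G-B$. So an edge inside $B$, or an edge from $B$ to an even component, lies in no perfect matching, which is impossible in a matching covered graph. For an even component, connectivity together with $B\neq\emptyset$ guarantees such an edge exists.
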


Edmonds, Lov\'{a}sz and Pulleyblank proved the following result on barrier
cuts and 2-separation cuts.

\begin{lemma}[\resultcite{elp}]\label{lem:elp-cuts-tight}
Let $C$ be either a barrier cut or a 2-separation cut of a matching covered
graph $G$. Then $C$ is a tight cut of $G$.
\end{lemma}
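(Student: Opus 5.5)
Both claims follow from a parity-and-counting argument on a fixed perfect matching. Fix an arbitrary perfect matching $M$ of $G$. Let $X$ be a shore of $C$. Since $|X|$ is odd in both cases below, $M$ matches an odd number of vertices of $X$ across $C$, so $|M\cap C|$ is odd and in particular at least $1$. It therefore suffices to show $|M\cap C|\le 1$ in the barrier case and $|M\cap C|\le 2$ in the 2-separation case; oddness then forces $|M\cap C|=1$.

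\emph{Barrier cuts.} Let $B$ be a barrier and $K$ an odd component of $G-B$, so $C=\partial(V(K))$. By Lemma~\ref{lem:barrier}, $B$ is stable and every component of $G-B$ is odd. Hence there are exactly $|B|$ components, all odd.
\begin{enumerate}
\item[(a)] Every edge of $C$ has its other end in $B$, because distinct components of $G-B$ are not adjacent.
\item[(b)] Each odd component $K'$ has $|M\cap\partial(V(K'))|\ge 1$ by the parity remark, and every such edge ends in $B$.
\item[(c)] $B$ is stable, so every vertex of $B$ is matched by $M$ into some component. The $|B|$ vertices of $B$ thus supply exactly $|B|$ $M$-edges leaving the components.
\end{enumerate}
With $|B|$ components each needing at least one of these $|B|$ edges, each component receives exactly one. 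In particular $|M\cap C|=1$.

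\emph{2-separation cuts.} Let $\{u,v\}$ be a 2-vertex-cut that is not a barrier. First I would check that every component of $G-u-v$ is even. Since $|V(G)|-2$ is even, the number of odd components of $G-u-v$ is even. If there were at least $4$, then $G$ would have no perfect matching, since at most two of these components could be matched to $u$ or $v$. If there were exactly $2$, then $\{u,v\}$ would be a barrier. Hence there are none.

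Now take $X=V(H_1)\cup\{v\}$, which is odd; the other 2-separation cut is symmetric. There are no edges between $H_1$ and $H_2$, so every edge of $C=\partial(X)$ is incident with $u$ or with $v$. Concretely, such an edge is of the form $uw$ with $w\in V(H_1)\cup\{v\}$, or $vw$ with $w\in V(H_2)$, where the edge $uv$, if present, is counted once. Since $M$ has at most one edge at $u$ and at most one at $v$, we get $|M\cap C|\le 2$, and oddness gives $|M\cap C|=1$.

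I expect no real obstacle. The only points needing care are the parity observation that $|M\cap\partial(X)|$ is odd whenever $|X|$ is odd, and the counting step in (b)--(c), which relies on every component of $G-B$ being odd (Lemma~\ref{lem:barrier}).
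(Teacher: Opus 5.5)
Your proof is correct. The paper does not prove this lemma; it quotes it from Edmonds, Lov\'asz and Pulleyblank, so there is no in-paper argument to compare against. What you give is the standard self-contained counting argument, and every step checks out:
\begin{itemize}
\item the parity fact $|M\cap\partial(X)|\equiv|X|\pmod 2$;
\item the pigeonhole count of the $|B|$ edges of $M$ leaving $B$ against the $|B|$ odd components;
\item the derivation that a $2$-vertex-cut which is not a barrier has only even components, which the paper merely asserts;
\item the bound $|M\cap C|\le 2$, because every edge of a $2$-separation cut is incident with $u$ or $v$.
\end{itemize}
Two small remarks. In the barrier case you do not actually need Lemma~\ref{lem:barrier}. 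Even without stability of $B$, at most $|B|$ edges of $M$ join $B$ to $V(G)-B$, and each of the $o(G-B)=|B|$ odd components needs at least one of them, so each odd component (in particular $K$) receives exactly one. In the $2$-separation case, your argument uses only that $G$ is matchable and $\{u,v\}$ is not a barrier, so matching-coveredness plays no role there.
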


\begin{lemma}\label{lem:order-four}
The following statements hold.
\begin{enumerate}
\item[(i)] If $G$ is a bipartite matching covered graph of order four, then
$\operatorname{Sim}(G)\cong C_4$.
\item[(ii)] If $G$ is a nonbipartite matching covered graph of order four,
then $\operatorname{Sim}(G)\cong K_4$.
\item[(iii)] Every matching covered graph of order four is solid.
\end{enumerate}
\end{lemma}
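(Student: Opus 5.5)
By Lemma~\ref{lem:simplification}(i)--(iii), both matching covering and solidity depend only on the underlying simple graph. So I would first reduce to the case where $G$ is simple, prove (i) and (ii) for simple graphs of order four, and then deduce (iii) from them.

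\textbf{Step 1: enumeration.} Let $G$ be simple and matching covered with $V(G)=\{a,b,c,d\}$.
\begin{itemize}
\item By Lemma~\ref{lem:two-connected}, $G$ is 2-connected, so every vertex has degree at least two and $G$ has at least four edges.
\item A 2-connected simple graph on four vertices is one of $C_4$, $K_4-e$ (a $C_4$ plus one chord) or $K_4$.
\item $K_4-e$ is not matching covered. Say the missing edge is $bd$, so the chord is $ac$. Any perfect matching containing $ac$ must also contain $bd$, which does not exist.
\item Hence $G\cong C_4$ or $G\cong K_4$. Since $C_4$ is bipartite and $K_4$ is not, parts (i) and (ii) follow.
\end{itemize}

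\textbf{Step 2: solidity.} For (iii), it suffices by Lemma~\ref{lem:simplification}(iii) to show that $C_4$ and $K_4$ are solid.
\begin{itemize}
\item $C_4$ is a bipartite matching covered graph, so it is solid by Theorem~\ref{thm:bipartite-solid}.
\item For $K_4$, we need every separating cut to be tight. Any separating cut is odd: both contractions must be matching covered and hence have even order, which forces both shores to be odd.
\item In a graph of order four, an odd cut has shores of sizes $1$ and $3$, so it is trivial.
\item A trivial cut $\partial(\{v\})$ meets every perfect matching in exactly one edge, so it is tight.
\item Therefore every separating cut of $K_4$ (and in fact of any order-four matching covered graph) is tight, and $K_4$ is solid.
\end{itemize}

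This parity argument applies directly to any matching covered $G$ of order four, so the detour through Sim in (iii) is not even needed.

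\textbf{Main obstacle.} The only point needing care is to justify that a separating cut has odd shores. The argument is: each $C$-contraction $G/X$ has order $|\comp X|+1$, and since it is matching covered it has a perfect matching, so its order is even. Hence $|\comp X|$ is odd, and symmetrically $|X|$ is odd.
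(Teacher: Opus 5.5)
Your proof is correct. Part (iii) is exactly the paper's argument: both contractions of a separating cut are matching covered, hence of even order, so both shores are odd. On four vertices an odd cut is therefore trivial, and so it is tight.

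For (i) and (ii) your route differs slightly from the paper's. You pass to $\operatorname{Sim}(G)$ and classify the 2-connected simple graphs on four vertices as $C_4$, $K_4-e$ and $K_4$. You then eliminate $K_4-e$ because its chord lies in no perfect matching.

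The paper argues each part directly instead:
\begin{itemize}
\item In (i) it uses the bipartition $|A|=|B|=2$ together with the minimum degree two coming from 2-connectivity.
\item In (ii) it takes a triangle $abc$ with fourth vertex $d$. A perfect matching through $ab$, $bc$ or $ca$ forces $cd$, $ad$ or $bd$, respectively, and this does not need 2-connectivity at all.
\end{itemize}

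The two routes are equally elementary. Yours treats both parts uniformly, at the cost of the small classification claim, which you should justify in a line. Minimum degree at least two gives at least four edges. With exactly four edges the graph is 2-regular, hence $C_4$. Five and six edges give $K_4-e$ and $K_4$.
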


\begin{proof}
Let $G$ be a bipartite matching covered graph of order four, and let $(A,B)$
be its bipartition. Since $G$ has a perfect matching, $|A|=|B|=2$. By
Lemma~\ref{lem:two-connected}, $G$ is 2-connected, and hence every vertex of
$G$ has at least two distinct neighbours. It follows that every vertex in
$A$ is adjacent to both vertices in $B$. Thus
$\operatorname{Sim}(G)\cong C_4$. This proves part~(i).

Let $G$ be a nonbipartite matching covered graph of order four. It contains a
triangle $abc$; let $d$ be the fourth vertex. Let $M$ be a perfect matching
containing $ab$. Then $cd\in M$, and hence $cd\in E(G)$. A perfect matching
containing $bc$ must also contain $ad$, and a perfect matching containing
$ca$ must also contain $bd$. Thus $\operatorname{Sim}(G)\cong K_4$. This
proves part~(ii).

To prove (iii), let $G$ be a matching covered graph of order four, and let
$C$ be a separating cut of $G$. Since both $C$-contractions are matching
covered, $C$ is an odd cut. Since $G$ has order four, $C$ has a singleton
shore and hence $C$ is  a tight cut. Therefore, $G$ is solid.
\end{proof}

For a matching covered graph with a nontrivial tight cut, we have the
following result.

\begin{theorem}[\resultcite{clm-characteristic}]\label{thm:solid-tight-contractions}
Let $C$ be a tight cut of a matching covered graph $G$. The graph $G$ is solid
if and only if both $C$-contractions of $G$ are solid.
\end{theorem}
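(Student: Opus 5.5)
The plan is to work directly with the two $C$-contractions. Write $C=\partial(X)$ and $G_1=G/\comp X$, $G_2=G/X$, with contraction vertices $\comp x\in V(G_1)$ and $x\in V(G_2)$. Since $C$ is tight, every perfect matching $M$ of $G$ restricts to perfect matchings $M_1$ of $G_1$ and $M_2$ of $G_2$ that use the same edge of $C$. Conversely, because both $G_i$ are matching covered, any perfect matching of $G_1$ containing an edge $e\in C$ can be spliced with a perfect matching of $G_2$ containing $e$. So perfect matchings of $G$ correspond exactly to compatible pairs $(M_1,M_2)$.

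Throughout I use the edge-wise description of separating cuts from~\cite{lm-book}, which the paper does not state. For matching covered $G$, a cut $D$ is separating if and only if every edge $e$ lies in some perfect matching $M$ with $|M\cap D|=1$.

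\textbf{Non-crossing cuts.} Let $Y\subseteq X$ with $|Y|$ odd, so $D=\partial(Y)$ is a cut of both $G_1$ and $G$. The correspondence gives two equivalences:
\begin{itemize}
\item $\partial_{G_1}(Y)$ is tight in $G_1$ if and only if $\partial_G(Y)$ is tight in $G$;
\item $\partial_{G_1}(Y)$ is separating in $G_1$ if and only if $\partial_G(Y)$ is separating in $G$.
\end{itemize}
For both, note that $|M\cap\partial(Y)|=|M_1\cap\partial(Y)|$, and that a good matching for an edge lying inside $\comp X$ is obtained by splicing any perfect matching of $G_2$ through that edge with a suitable perfect matching of $G_1$. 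The symmetric statement holds for $Y\subseteq\comp X$ and $G_2$.

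\textbf{$G$ solid implies $G_1,G_2$ solid.} Every cut of $G_1$ has a shore $Y\subseteq X$ avoiding $\comp x$. If it is separating in $G_1$, it is separating in $G$, hence tight in $G$, hence tight in $G_1$. The same argument applies to $G_2$.

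\textbf{$G_1,G_2$ solid implies $G$ solid.} Let $D=\partial(Y)$ be separating in $G$. If $D$ does not cross $C$, then after complementing shores we have $Y\subseteq X$ or $Y\subseteq\comp X$, and the equivalences above finish the argument. If $D$ crosses $C$, then by parity we may assume $|X\cap Y|$ and $|\comp X\cap\comp Y|$ are odd. The plan is to prove two things:
\begin{enumerate}
\item $\partial(X\cap Y)$ is separating in $G_1$, and symmetrically $\partial(X\cup Y)$ is separating in $G_2$. Both are then tight by solidity, and hence tight in $G$.
\item Every perfect matching meets $D$ exactly once.
\end{enumerate}

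For step 2, apply the submodular identity
\[
|M\cap\partial(X)|+|M\cap\partial(Y)|=|M\cap\partial(X\cap Y)|+|M\cap\partial(X\cup Y)|+2|M\cap E[X-Y,\,Y-X]|.
\]
Using $|M\cap\partial(X)|=1$ and the tightness of the two corner cuts, this gives
\[
|M\cap D|=1+2|M\cap E[X-Y,\,Y-X]|.
\]

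\textbf{Main obstacle.} Two points are hard: proving step 1, and showing that $M$ never uses an edge of $E[X-Y,Y-X]$.

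For the first point, I would take an edge $e$ of $G_1$, pick a perfect matching of $G$ through (a lift of) $e$ with $|M\cap D|=1$, and show that its restriction $M_1$ meets $\partial(X\cap Y)$ once. The argument rests on the single $C$-edge of $M$: its position relative to $Y$ is controlled by parity and by $|M\cap D|=1$.

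For the second point, suppose $M$ contains an edge $f\in E[X-Y,Y-X]$. Then $f$ is the unique edge of $M$ in $C$. I would combine this with tightness of $\partial(X\cap Y)$ in $G$ and the parity of $|X\cap Y|$ to get a contradiction. If a direct contradiction fails, I would instead use that $D$ is separating, which makes $G/Y$ matching covered, and modify $M$ along an $M$-alternating cycle through $f$. Either way I expect this to be where the real work lies.
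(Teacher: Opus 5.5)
\textbf{Verdict: genuine gap.} The crossing case of the converse (the direction that needs the real work) is left unproved.

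Everything before that is sound. This covers the splicing correspondence, the edge-wise criterion for separating cuts, and your two equivalences for cuts with a shore inside $X$ (or inside $\comp X$). It also covers the implication that $G$ solid forces $G_1$ and $G_2$ solid, and the non-crossing case of the converse. The paper does not prove this theorem; it quotes it from Carvalho, Lucchesi and Murty, so your argument has to stand on its own.

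In the crossing case both steps are open. Moreover, the first route you suggest for the second step cannot succeed. Take a perfect matching $M$ made up of an edge $f\in E[X-Y,Y-X]$, an edge joining $X\cap Y$ to $X-Y$, an edge joining $\comp X\cap\comp Y$ to $Y-X$, and edges inside the four corners. It meets $C$, $\partial(X\cap Y)$ and $\partial(X\cup Y)$ exactly once each and meets $D$ three times, with all parities consistent. So tightness of the corner cuts plus parity, applied to that one $M$, gives no contradiction.

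The missing idea is to apply your submodular identity to a perfect matching that meets $D$ exactly once. The separating property of $D$ provides such a matching through any prescribed edge.

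\begin{itemize}
\item For such an $M$, since $|M\cap C|=1$, the identity becomes $2=|M\cap\partial(X\cap Y)|+|M\cap\partial(X\cup Y)|+2|M\cap E[X-Y,Y-X]|$. Both corner shores are odd, so the first two terms are at least $1$. Hence both equal $1$, and $M$ avoids $E[X-Y,Y-X]$.
\item Every edge of $G$ lies in such an $M$, so the edge-wise criterion shows that $\partial(X\cap Y)$ and $\partial(X\cup Y)$ are separating in $G$. By your non-crossing equivalences they are separating in $G_1$ and $G_2$ respectively, hence tight there by solidity, hence tight in $G$. This is step~1, and it needs no tracking of where the $C$-edge of $M$ lies.
\item The same computation gives $E[X-Y,Y-X]=\emptyset$. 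An edge $f$ there lies in some perfect matching meeting $D$ once, and every such matching avoids $E[X-Y,Y-X]$, which is a contradiction.
\item Your identity then yields $|M\cap D|=1+1-1=1$ for every perfect matching $M$, so $D$ is tight.
\end{itemize}

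No alternating-cycle argument is needed.
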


As shown in Figure~\ref{fig:extremal}, $A_n$ is the join of $\comp{K}_{n-1}$
and $K_3\cup\comp{K}_{n-2}$; $B_n$ is the join of $\comp{K}_{n-2}$ and
$2K_3\cup\comp{K}_{n-4}$; and $D_n$ is the join of $\comp{K}_{n-2}$ and
$F\cup\comp{K}_{n-3}$, where $F$ is the join of $K_1$ and $2K_2$.
In the following, we will show that the graphs $A_n$, $B_n$ and $D_n$ are
simple solid matching covered graphs.

\begin{lemma}\label{lem:families}
The graphs $A_n$ ($n\ge2$), $B_n$ ($n\ge4$) and $D_n$ ($n\ge3$) are
simple solid matching covered graphs. Each of them has order $2n$ and size $n^2+2$.
\end{lemma}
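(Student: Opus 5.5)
The plan is to handle the three families uniformly. Each graph has the form $\comp K_s\vee(H\cup tK_1)$, where $S=V(\comp K_s)$ is a stable set and $H$ consists of odd components. I will exhibit $S$ as a barrier, apply Lemma~\ref{lem:elp-cuts-tight} to get tight barrier cuts around the odd pieces $K_3$ or $F$, and then invoke Theorem~\ref{thm:solid-tight-contractions}.

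\textbf{Order and size.} These are direct counts, using $|E(F)|=6$:
\begin{itemize}
\item $A_n$ has $(n-1)+3+(n-2)=2n$ vertices and $(n-1)(n+1)+3=n^2+2$ edges.
\item $B_n$ has $(n-2)+6+(n-4)=2n$ vertices and $(n-2)(n+2)+6=n^2+2$ edges.
\item $D_n$ has $(n-2)+5+(n-3)=2n$ vertices and $(n-2)(n+2)+6=n^2+2$ edges.
\end{itemize}
Simplicity is immediate from the definitions as joins of simple graphs.

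\textbf{Matching covered.} Connectivity is clear because every vertex outside $S$ is joined to all of $S\neq\emptyset$. I will check that every edge lies in a perfect matching by explicit construction.
\begin{itemize}
\item Inside a triangle $abc$ of $A_n$ or $B_n$, match $ab$ and send $c$ to some vertex of $S$.
\item For an edge of $F$ in $D_n$, note that $F-v$ has a perfect matching whenever $v$ is a vertex of $F$ whose removal leaves a graph with a perfect matching. For the edge $ca$ with centre $c$, use $ca$ and $de$ and match $b$ into $S$. For $ab$, use $ab$ and $cd$ and match $e$ into $S$.
\item For a join edge $us$ with $s\in S$, send the remaining odd pieces and isolated vertices to the other vertices of $S$. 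This is possible because each odd piece minus any one vertex, or minus nothing after sending one vertex to $S$, is matchable, and the counts balance since $|S|$ equals the number of odd components.
\end{itemize}

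\textbf{Solidity.} In each graph, $G-S$ has exactly $|S|$ odd components:
\begin{itemize}
\item for $A_n$, the $K_3$ and $n-2$ singletons, which is $n-1=|S|$;
\item for $B_n$, two triangles and $n-4$ singletons, which is $n-2$;
\item for $D_n$, $F$ and $n-3$ singletons, which is $n-2$.
\end{itemize}
So $S$ is a barrier, and for each nontrivial odd component $K$ the barrier cut $\partial(V(K))$ is tight by Lemma~\ref{lem:elp-cuts-tight}.

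Contracting all nontrivial odd components yields a bipartite graph with sides $S$ and the contracted vertices together with the singletons. This graph is matching covered because it is a tight cut contraction of a matching covered graph, so it is solid by Theorem~\ref{thm:bipartite-solid}.

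The other contractions are as follows.
\begin{itemize}
\item For a triangle, the contraction is $K_3$ plus a vertex joined to it by parallel edges, whose underlying simple graph is $K_4$. It is solid by Lemma~\ref{lem:order-four}(iii) and Lemma~\ref{lem:simplification}(iii).
\item For $F$ in $D_n$, the contraction has underlying simple graph $K_2\vee 2K_2$ on $\{c,w,a,b,d,e\}$, where $w$ is the contracted vertex. Here $\{c,w\}$ is a 2-vertex-cut leaving two even components $ab$ and $de$, so it is not a barrier and is therefore a 2-separation. The 2-separation cut $\partial(\{a,b,w\})$ is tight, and both of its contractions have underlying simple graph $K_4$, so they are solid.
\end{itemize}
Repeated application of Theorem~\ref{thm:solid-tight-contractions} then gives solidity of the whole graph.

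The step needing most care is this last one: $K_2\vee 2K_2$ is the one non-routine brick-level check. I also need to confirm that Theorem~\ref{thm:solid-tight-contractions} may be applied iteratively, with each intermediate contraction matching covered, and that the small cases work: $A_2=K_4$, and $D_3$, where no singletons remain.
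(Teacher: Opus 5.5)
Your proposal is correct and follows essentially the paper's route: tight cuts $\partial(V(K))$ around the nontrivial odd pieces reduce each graph to order-four graphs, a bipartite graph, and $D_3=K_2\vee 2K_2$, which both you and the paper split along the 2-separation formed by its two vertices of degree five. The differences are minor. You obtain tightness from the barrier-cut lemma instead of counting $|M\cap C|$ directly, and you contract both triangles of $B_n$ rather than recursing through $A_{n-1}$; however, your remark about $F-v$ is circular as written and should simply state that $F$ is factor-critical, which is exactly what your join-edge step uses.
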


\begin{proof}
By the definitions, the graphs $A_n$, $B_n$ and $D_n$ are simple and
connected, and each has order $2n$ and size $n^2+2$.

We first consider $A_n$. We begin by proving that $A_n$ is matching covered.
In the definition of $A_n$,
let $H=K_3\cup(n-2)K_1$, so that $A_n=\comp{K}_{n-1}\vee H$, and let $T$
denote the triangle in $H$. The graph $A_2=K_4$ is matching covered.
Suppose that $n\ge3$, and let $e$ be any edge of $A_n$. We consider separately
the cases in which $e$ has both endvertices, precisely one endvertex or no endvertex in $V(T)$.
Suppose first that $e$ has both endvertices in $V(T)$, and write $e=ab$. Then
the graph $A_n-a-b$ is isomorphic to $K_{n-1,n-1}$. Let $M$ be a perfect
matching of $A_n-a-b$.
Then $M\cup\{e\}$ is a perfect matching of $A_n$ containing $e$.
Suppose next that $e$ has precisely one endvertex in $V(T)$. Let $a$ be the
endvertex of
$e$ in $V(T)$, and let $x$ be its other endvertex. By the definition of $A_n$,
$x\in V(A_n)-V(H)$. Let $bc$ be the edge of $T-a$.
The graph $A_n-\{a,x,b,c\}$ is isomorphic to $K_{n-2,n-2}$ and hence has a
perfect matching $M$. Then $M\cup\{e,bc\}$ is a perfect matching of $A_n$
containing $e$. Finally, suppose that $e$ has no endvertex in $V(T)$. By the
definition of $A_n$, we may write $e=xy$, where $x\in V(A_n)-V(H)$ and
$y\in V(H)-V(T)$. Let $ab$ be any edge of $T$. The graph
$A_n-\{x,y,a,b\}$ is isomorphic to $K_{n-2,n-2}$ and hence has a perfect
matching $M$. Then $M\cup\{e,ab\}$ is a perfect matching of $A_n$ containing
$e$.
Thus every edge of $A_n$ is covered by a perfect matching. Since $A_n$ is
connected, it is a matching covered graph.

We next prove that $A_n$ is solid. For $n=2$, we have $A_2=K_4$, which is
solid by Lemma~\ref{lem:order-four}(iii). Suppose that $n\ge3$, and let
$C=\partial(V(T))$. By the definition of $A_n$, $C$ is the set of
edges joining $V(T)$ and $V(A_n)-V(H)$. Let $M$ be any perfect matching of
$A_n$. Since $V(A_n)-V(H)$ is a stable set of cardinality $n-1$, the matching $M$
contains precisely $n-1$ edges joining $V(A_n)-V(H)$ and $V(H)$. Moreover,
since every vertex of $V(H)-V(T)$ is isolated in $H$, the matching $M$
contains precisely $n-2$ edges joining $V(H)-V(T)$ and $V(A_n)-V(H)$.
Therefore,
\[
 |M\cap C|=(n-1)-(n-2)=1.
\]
Since $M$ was arbitrary, $C$ is a tight cut of $A_n$.
Since $C$ is tight, both $C$-contractions are matching covered. The underlying
simple graphs of these contractions are isomorphic to $K_4$ and
$K_{n-1,n-1}$. The graph $K_4$ is solid by Lemma~\ref{lem:order-four}(iii),
whereas $K_{n-1,n-1}$ is solid by Theorem~\ref{thm:bipartite-solid}. Thus, both
$C$-contractions are solid by Lemma~\ref{lem:simplification}. It follows from
Theorem~\ref{thm:solid-tight-contractions} that $A_n$ is solid.

We next consider $B_n$. We begin by proving that $B_n$ is matching covered.
In the definition of $B_n$, let
$J=2K_3\cup(n-4)K_1$, so that $B_n=\comp{K}_{n-2}\vee J$, and let $T_1$
and $T_2$ denote the two triangles in $J$. Let $e$ be any edge of $B_n$.
We consider separately the cases in which $e$ has both end vertices in one of the
triangles, precisely one end vertex in $V(T_1)\cup V(T_2)$ or no end vertex in
$V(T_1)\cup V(T_2)$.
Suppose first that $e$ has both end vertices in one of the triangles. Adjust notation
so that $e=ab\in E(T_1)$. The graph $B_n-a-b$ is isomorphic to $A_{n-1}$,
which is matching covered by the preceding argument. Let $M$ be a perfect
matching of $B_n-a-b$. Then $M\cup\{e\}$ is a perfect matching of $B_n$
containing $e$.
Suppose next that $e$ has precisely one endvertex in $V(T_1)\cup V(T_2)$.
Adjust notation so that this endvertex, say $a$, lies in $V(T_1)$, and write
$e=ax$, where $x\in V(B_n)-V(J)$. Let $bc$ be the edge of $T_1-a$.
The graph $B_n-\{a,x,b,c\}$ is isomorphic to $A_{n-2}$ and hence has a
perfect matching $M$. Then $M\cup\{e,bc\}$ is a perfect matching of $B_n$
containing $e$.
Finally, suppose that $e$ has no endvertex in $V(T_1)\cup V(T_2)$. By the
definition of $B_n$, we may write $e=xy$, where $x\in V(B_n)-V(J)$ and $y$
is an isolated vertex of $J$; in particular, $n\ge5$. Let $ab$ be any edge
of $T_1$. The graph $B_n-\{x,y,a,b\}$ is isomorphic to $A_{n-2}$ and hence
has a perfect matching $M$. Then $M\cup\{e,ab\}$ is a perfect matching of
$B_n$ containing $e$. Thus every edge of $B_n$ is covered by a perfect
matching. Since $B_n$ is connected, it is a matching covered graph.

We next prove that $B_n$ is solid. Let $M$ be any perfect matching of $B_n$.
Since $V(B_n)-V(J)$ is a stable set of cardinality $n-2$, the matching $M$ contains
precisely $n-2$ edges joining $V(B_n)-V(J)$ and $V(J)$. The $n-4$ isolated
vertices of $J$ are matched by $M$ to vertices of $V(B_n)-V(J)$. Consequently,
precisely two edges of $M$ join $V(B_n)-V(J)$ and
$V(T_1)\cup V(T_2)$. Since neither triangle has a perfect matching, at least
one of these two edges is incident with each triangle. Thus
$|M\cap C|=1$, where $C=\partial(V(T_1))$. Since $M$ is arbitrary,
$C$ is a tight cut of $B_n$.
Since $C$ is tight, both $C$-contractions are matching covered. The underlying
simple graphs of these contractions are isomorphic to $K_4$ and $A_{n-1}$.
The graph $K_4$ is solid by Lemma~\ref{lem:order-four}(iii), whereas
$A_{n-1}$ is solid by the preceding argument. Thus both $C$-contractions are
solid by Lemma~\ref{lem:simplification}. It follows from
Theorem~\ref{thm:solid-tight-contractions} that $B_n$ is solid.

Finally, we consider $D_n$. We begin by proving that $D_n$ is matching
covered. In the definition of $D_n$, let
$Q=F\cup(n-3)K_1$, so that $D_n=\comp{K}_{n-2}\vee Q$.
Let $z$ denote the vertex of degree four in $F$,
and let $a_1b_1$ and $a_2b_2$ denote the two edges of $F-z$. Let $e$ be any
edge of $D_n$. We distinguish five cases according to the number of endvertices of
$e$ in $V(F)$ and whether $z$ is an endvertex of $e$.
If $e$ has both endvertices in $V(F)$ and $z$ is an endvertex of
$e$, adjust notation so that $e=za_1$. The graph
$D_n-\{z,a_1,a_2,b_2\}$ is isomorphic to $K_{n-2,n-2}$ and hence has a
perfect matching $M$. Then $M\cup\{e,a_2b_2\}$ is a perfect matching of
$D_n$ containing $e$.
Suppose that $e$ has both endvertices in $V(F)$ and $z$ is not an endvertex of
$e$. Adjust notation so that $e=a_1b_1$. The graph $D_n-a_1-b_1$ is
isomorphic to $A_{n-1}$, which is matching covered by the preceding argument.
Let $M$ be a perfect matching of $D_n-a_1-b_1$. Then $M\cup\{e\}$ is a
perfect matching of $D_n$ containing $e$.
If $e$ has precisely one endvertex in $V(F)$ and this endvertex is
$z$, write $e=zx$, where $x\in V(D_n)-V(Q)$. The graph
$D_n-\{z,x,a_1,b_1,a_2,b_2\}$ is isomorphic to $K_{n-3,n-3}$. Let $M$ be
a perfect matching of this graph, taking $M$ to be the empty matching when
$n=3$. Then
$M\cup\{e,a_1b_1,a_2b_2\}$ is a perfect matching of $D_n$ containing $e$.
If $e$ has precisely one endvertex in $V(F)$ and this endvertex is
not $z$, adjust notation so that $e=a_1x$, where $x\in V(D_n)-V(Q)$. The
graph $D_n-\{a_1,x,z,b_1,a_2,b_2\}$ is isomorphic to $K_{n-3,n-3}$ and
hence has a perfect matching $M$, taking $M$ to be the empty matching when
$n=3$. Then $M\cup\{e,zb_1,a_2b_2\}$ is a perfect matching of $D_n$
containing $e$.
Finally, suppose that $e$ has no endvertex in $V(F)$. By the definition of $D_n$,
we may write $e=xy$, where $x\in V(D_n)-V(Q)$ and $y$ is an isolated vertex
of $Q$; in particular, $n\ge4$. The graph $D_n-\{x,y,a_1,b_1\}$ is
isomorphic to $A_{n-2}$ and hence has a perfect matching $M$. Then
$M\cup\{e,a_1b_1\}$ is a perfect matching of $D_n$ containing $e$. Thus
every edge of $D_n$ is covered by a perfect matching. Since $D_n$ is
connected, it is a matching covered graph.

We next prove that $D_n$ is solid. We first consider the case $n=3$. Let $u$
denote the unique vertex of
$V(D_3)-V(F)$. Both $u$ and $z$ are adjacent to every other vertex of $D_3$,
and $D_3-u-z$ has two components, each isomorphic to $K_2$. Thus, $\{u,z\}$
is a 2-vertex-cut of $D_3$. Since both components of $D_3-u-z$ have even
order, $\{u,z\}$ is not a barrier of $D_3$. It follows that $\{u,z\}$ is a
2-separation of $D_3$. Let $X$ and $Y$ denote the vertex sets of the two
components of $D_3-u-z$.
Set
\[
 C=\partial(X\cup\{z\})=\partial(Y\cup\{u\}).
\]
Then $C$ is a 2-separation cut of $D_3$ and hence is tight by
Lemma~\ref{lem:elp-cuts-tight}. Since $C$ is tight, its two contractions are
matching covered. Each contraction has order four and is therefore solid by
Lemma~\ref{lem:order-four}(iii). It follows from
Theorem~\ref{thm:solid-tight-contractions} that $D_3$ is solid.

Now suppose that $n\ge4$, and set $C=\partial(V(F))$. In every
perfect matching of $D_n$, the $n-3$ isolated vertices of $Q$ are matched to
$n-3$ vertices of $V(D_n)-V(Q)$, and the remaining vertex of
$V(D_n)-V(Q)$ is matched to a vertex of $F$. Thus, every perfect matching of
$D_n$ contains precisely one edge of $C$, and hence $C$ is tight.
Since $C$ is tight, its two contractions are matching covered. The underlying
simple graphs of these contractions are isomorphic to $D_3$ and
$K_{n-2,n-2}$. The graph $D_3$ is solid by the preceding paragraph, and
$K_{n-2,n-2}$ is solid by Theorem~\ref{thm:bipartite-solid}. Thus both contractions are
solid by Lemma~\ref{lem:simplification}. It follows from
Theorem~\ref{thm:solid-tight-contractions} that $D_n$ is solid.
\end{proof}

Each of the three families can contains graphs of arbitrarily large order. Therefore,
Lemma~\ref{lem:families} disproves Conjecture~\ref{conj:LM}.

\subsection{Proof of Theorem~\ref{thm:main}}

Here, we will prove our main result, Theorem~\ref{thm:main}. We first introduce a lemma.

\begin{lemma}\label{lem:small-bricks}
Every simple solid brick of order six has at most ten edges.
\end{lemma}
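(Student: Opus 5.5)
By Theorem~\ref{thm:odd-bicycle}, the condition for a brick of order six to be nonsolid becomes a purely combinatorial one. If $C_1,C_2$ are disjoint odd cycles in a graph on six vertices, each has at least three vertices, so both are triangles and $G-(V(C_1)\cup V(C_2))$ is empty, hence trivially matchable. So a brick $G$ of order six is nonsolid if and only if $V(G)$ can be split into two triples $X,\comp X$ with $G[X]\cong G[\comp X]\cong K_3$. The plan is to assume that $G$ is a simple solid brick of order six with $|E(G)|\ge 11$ and derive a contradiction by studying the complement $H=\comp G$. This complement has at most $15-11=4$ edges.

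\textbf{Step 1 (degree restriction on $H$).} By Theorem~\ref{thm:brick-characterization}, $G$ is $3$-connected, so $\delta(G)\ge3$. Hence every vertex has degree at most $2$ in $H$. Thus every component of $H$ is a path (possibly a single vertex) or a cycle. Since $|E(H)|\le4$, any cycle in $H$ is a $C_3$ or a $C_4$.

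\textbf{Step 2 (translate solidity).} A partition $\{X,\comp X\}$ into triples with both $G[X]$ and $G[\comp X]$ complete is the same as a partition into two triples each stable in $H$. Equivalently, it is a proper $2$-colouring of $H$ with both colour classes of size $3$. So solidity of $G$ says exactly that $H$ has no balanced proper $2$-colouring. Suppose $H$ is bipartite, so its components are paths and possibly one $C_4$. Each even component can be $2$-coloured with equal class sizes. Each odd component (an odd-order path) has class sizes differing by one, and we may choose which class is larger. Since the total order is $6$, the number of odd components is even. We pair them off with opposite orientations, which gives a balanced colouring and contradicts solidity. Hence $H$ contains an odd cycle. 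With at most four edges and maximum degree two, that cycle must be a triangle $abc$ in $H$.

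\textbf{Step 3 (contradict bicriticality).} Now $\{a,b,c\}$ is a stable set of $G$. Choose two vertices $x,y$ outside it. Then $G-x-y$ has four vertices, three of which form a stable set, so $G-x-y$ has no perfect matching. This contradicts the bicriticality of $G$ given by Theorem~\ref{thm:brick-characterization}. Therefore $|E(G)|\le 10$.

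The only step needing care is the balancing argument in Step~2, namely that a bipartite $H$ with maximum degree at most two on six vertices always admits a $3$--$3$ proper colouring. This amounts to the parity count of odd components together with the freedom to flip colours within each component, so I expect no real difficulty there.
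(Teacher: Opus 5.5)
Your proof is correct and follows essentially the paper's argument: pass to the complement, use $\delta(G)\ge 3$ to get $\Delta(\comp G)\le 2$ and $|E(\comp G)|\le 4$, rule out a triangle in $\comp G$, and balance a bipartition of the then-bipartite complement to obtain two disjoint spanning triangles of $G$, contradicting Theorem~\ref{thm:odd-bicycle}. The only difference is how you exclude a triangle in $\comp G$: you use bicriticality (deleting two other vertices leaves four vertices, three of them stable), whereas the paper uses matching-coveredness to force the complementary triple to be stable too, which would make $G$ bipartite.
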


\begin{proof}
Let $G$ be a simple solid brick of order six. By
Theorem~\ref{thm:brick-characterization}, $G$ is 3-connected.
Thus $\delta(G)\ge3$, and hence $\Delta(\comp G)\le2$. Suppose that
$|E(G)|\ge11$. Then $|E(\comp G)|\le4$.

Let $S$ be a subset of $V(G)$ of cardinality three. Then $|V(G)-S|=3$. If
$S$ induces a triangle in $\comp G$, then $S$ is a stable set in $G$. Thus, no
edge of $G$ with both endvertices in $V(G)-S$ lies in a perfect matching of $G$.
Since $G$ is matching covered, $V(G)-S$ is also a stable set in $G$. Hence $G$
is bipartite, contrary to the fact that $G$ is a brick. Therefore, $\comp G$
contains no triangle.

Since $\comp G$ has at most four edges, every odd cycle in $\comp G$ would be
a triangle. Thus $\comp G$ has no odd cycle. Since
$\Delta(\comp G)\le2$, each component of $\comp G$ is a path or an even cycle.
Every component of $\comp G$ of even order is either an even cycle or a path of
even order, and hence has a bipartition with parts of equal cardinality.
Every component of $\comp G$ of odd order is a path and has a bipartition whose
parts differ in cardinality by one. Since $\comp G$ has even order, the number
of its components of odd order is even. Therefore, there exists a bipartition
$(X,Y)$ of $\comp G$ such that $|X|=|Y|$.
Since $\comp G$ has order six, $|X|=|Y|=3$. Thus, $X$ and $Y$ are stable sets
of $\comp G$, each of cardinality three. Therefore, the subgraphs $G[X]$ and
$G[Y]$ are complete. Hence $G[X]$ and $G[Y]$ are two vertex-disjoint
triangles of $G$; let $C_1$ and $C_2$ denote these two triangles. Since
$V(C_1)\cup V(C_2)=V(G)$, the graph
$G-(V(C_1)\cup V(C_2))$ is empty, and the empty set is a perfect matching of
this graph. By Theorem~\ref{thm:odd-bicycle}, $G$ is nonsolid, contrary to the
hypothesis that $G$ is solid. Hence
$|E(G)|\le10$.
\end{proof}

We use the term \emph{ELP cut} to mean either a barrier cut or a
2-separation cut. ELP cuts play an important role in tight cut decompositions
of matching covered graphs. The following theorem gives a sufficient condition
for the existence of a nontrivial ELP cut.

\begin{theorem}[\resultcite{elp}]\label{thm:ELP}
If a matching covered graph has a nontrivial tight cut, then it has a
nontrivial ELP cut.
\end{theorem}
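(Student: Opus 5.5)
The statement is the classical theorem of Edmonds, Lov\'asz and Pulleyblank, which the paper quotes; I would follow their route. I would argue by contraposition: suppose $G$ is matching covered and has no nontrivial barrier cut and no nontrivial 2-separation cut, and show that every tight cut of $G$ is trivial.

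First, I would use the absence of ELP cuts to show that either $G$ is bipartite with a very restricted structure, or $G$ is bicritical and 3-connected. Suppose $G$ has a barrier $B$ with $|B|\ge2$. By Lemma~\ref{lem:barrier}, every component of $G-B$ is odd. If some component has at least three vertices, its barrier cut is nontrivial. Hence all components of $G-B$ are singletons, and $G$ is bipartite with bipartition $(B,V(G)-B)$.

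In the nonbipartite case, the maximal barriers are therefore singletons, and Tutte's theorem then gives that $G$ is bicritical. If $G$ had a 2-vertex-cut $\{u,v\}$, it would either be a barrier, which is excluded, or a 2-separation. Each component of $G-u-v$ is even, so a 2-separation would give a nontrivial 2-separation cut. Thus $G$ is 3-connected. For the bipartite case, I would use the analogous bipartite argument, based on Hall-type surplus, to show that $G$ is a brace.

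The main obstacle is proving that a 3-connected bicritical graph has no nontrivial tight cut; this is the heart of the ELP theorem. I would try the following. Take a nontrivial tight cut $\partial(X)$ and choose $u,v$ on the same side, say in $X$, arranged so that $G-u-v$ still has a perfect matching $M$. Parity forces $|M\cap\partial(X)|\ge2$.

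I would then extend $M$ to a perfect matching of $G$, or at least to one of a related graph, that uses at least three cut edges. To do this I would route an $M$-alternating path from $u$ to $v$ through $\overline{X}$, using 3-connectivity to avoid the obstruction. This would contradict tightness.

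The alternative, which is the original ELP approach, is to use the Gallai--Edmonds/Tutte structure to show directly that a tight cut in a bicritical graph yields either a nontrivial barrier or a 2-separation. I expect this structural step to be the delicate one. The case analysis in the bipartite part, showing that a nontrivial tight cut in a bipartite graph yields a nontrivial barrier cut, is routine by comparison.
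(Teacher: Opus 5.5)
The paper gives no proof of this statement; it quotes it from Edmonds, Lov\'asz and Pulleyblank. Your reductions are sound. A barrier $B$ with $|B|\ge 2$ and a component of order at least three gives a nontrivial barrier cut. Otherwise every barrier of a nonbipartite $G$ has at most one vertex, Tutte's theorem gives bicriticality, and a non-barrier 2-vertex-cut gives a nontrivial 2-separation cut, so $G$ is 3-connected. The bipartite case is routine too, though you only assert it: if $\partial(X)$ is tight with $|X\cap A|=|X\cap B|+1$, matching coveredness forces $N(X\cap B)\subseteq X\cap A$. So $X\cap A$ is a barrier of size at least two, and $|\overline{X}|\ge 3$ forces a component of order at least three inside $\overline{X}$.

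The genuine gap is the step you flag yourself: that a 3-connected bicritical graph has no nontrivial tight cut. This is not an auxiliary lemma but the whole content of the theorem (it is the hard direction of Theorem~\ref{thm:brick-characterization}), and your proposal does not prove it. Your sketch also starts from a false claim. If $\partial(X)$ is odd and $u,v\in X$, then $|X-\{u,v\}|$ is odd, so a perfect matching $M$ of $G-u-v$ meets $\partial(X)$ in an \emph{odd} number of edges. Parity gives at least one crossing edge, not at least two, and exactly one is entirely possible. Even granting some $M$ with $|M\cap\partial(X)|\ge 3$, any $M$-augmenting $u$--$v$ path $P$ yields a perfect matching $M\triangle P$ of $G$. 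So tightness says precisely that every such path cuts the number of crossing edges down to one. What must be shown is that bicriticality and 3-connectivity prevent this. ``Routing an alternating path through $\overline{X}$ using 3-connectivity'' gives no mechanism for that; a path through $\overline{X}$ uses cut edges, which is exactly how crossings get cancelled. The Gallai--Edmonds alternative is only named. ELP's proof of this step is polyhedral, using the perfect matching polytope and uncrossing of tight cuts; a combinatorial proof needs a real structural lemma about tight cuts in bicritical graphs, not a single exchange argument. Within this paper you could close the gap by citing Theorem~\ref{thm:brick-characterization}. But that result comes from the same source and, given your reductions, is equivalent to the statement, so this would be a citation rather than a proof.
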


\begin{proposition}\label{prop:small-equality}
The following statements hold.
\begin{enumerate}
\item[(i)] Let $G$ be a simple solid matching covered graph of order six. Then
$|E(G)|\le11$. Equality holds if and only if $G\cong A_3$ or $G\cong D_3$.
\item[(ii)] Let $G$ be a simple solid matching covered graph of order eight. If
$|E(G)|=18$, then $G\cong A_4$, $G\cong B_4$ or $G\cong D_4$.
\end{enumerate}
\end{proposition}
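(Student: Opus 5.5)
We split according to whether $G$ is a brick, is bipartite, or has a nontrivial tight cut.

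\textbf{Bricks and bipartite graphs.} If $G$ is a simple solid brick of order six, Lemma~\ref{lem:small-bricks} gives $|E(G)|\le10<11$. If $G$ is a simple solid brick of order eight, we cannot use Theorem~\ref{thm:brick-bound} directly, since it only gives $|E(G)|\le16$ for $n=4$. That bound is still below $18$, so this case is excluded. If $G$ is bipartite, then $|E(G)|\le n^2$, which is $9$ for $n=3$ and $16$ for $n=4$, so bipartite graphs are excluded too.

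\textbf{Nonbipartite graphs with a nontrivial tight cut.} Here Theorem~\ref{thm:ELP} gives a nontrivial ELP cut $C=\partial(X)$. It is tight by Lemma~\ref{lem:elp-cuts-tight}, and both $C$-contractions $G_1=G/\comp X$ and $G_2=G/X$ are solid by Theorem~\ref{thm:solid-tight-contractions}. Write their orders as $2n_1$ and $2n_2$, so that $n_1+n_2=n+1$ and $n_1,n_2\ge2$. We count
\[
|E(G)|=|E(G[X])|+|E(G[\comp X])|+|C|,
\]
and bound each term using the simple graphs $\operatorname{Sim}(G_i)$, which are solid by Lemma~\ref{lem:simplification}.

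\textbf{Barrier cuts.} Let $B$ be the barrier, with $|B|=k\ge2$, and let $K$ be a nontrivial odd component of $G-B$. Then $B$ is stable by Lemma~\ref{lem:barrier}, and every edge leaving $K$ goes to $B$. The counting is cleanest if we take a \emph{maximal} barrier $B$: all odd components are then factor-critical, and the graph obtained by contracting each nontrivial component is bipartite on the classes $B$ and the components. This bipartite part contributes at most $k\cdot|B'|$ edges, where $B'$ is the set of components. Each nontrivial component $K$ of odd order $2j+1$ contributes at most $\binom{2j+1}{2}$ internal edges plus $\le k(2j+1)$ edges to $B$. Optimising shows the total exceeds $n^2$ by at most $2$, with equality forced only when there is one triangle component ($A_n$), two triangle components ($B_n$), or one component $F=K_1\vee 2K_2$ with $k=n-2$ ($D_n$).

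The larger components must be restricted using solidity. A component $K_5$ is ruled out, since its contraction would be a nonsolid brick: $K_6$ has two disjoint triangles. This is where Lemma~\ref{lem:small-bricks} and the size-$11$ analysis for order six feed back in. The contraction $G/\comp{V(K)}$ has order $|K|+1$ and is solid, so for $|K|=5$, part (i) bounds it by $11$ edges. The equality cases of (i) force that contraction to be $\operatorname{Sim}\cong A_3$ or $D_3$; after re-expanding, these give $D_n$ (from $D_3$), while $A_3$ inside would not respect maximality.

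\textbf{2-separation cuts.} Let $\{u,v\}$ be the 2-separation, with components on sides of orders $2a$ and $2b$ (excluding $u,v$), where $a+b=n-1$. Counting gives at most $\binom{2a}{2}+\binom{2b}{2}+$ the edges at $u,v$. Using solidity of the contractions, which have orders $2a+2$ and $2b+2$, we apply induction via the bound for smaller orders (for order four, at most $6$). This gives $|E(G)|\le (a+1)^2+2+(b+1)^2+2-\,$(edges counted twice). For $n=3$ this is $a=b=1$: two $K_4$'s glued, giving at most $11$ with equality exactly $D_3$. For $n=4$ the same count falls short of $18$.

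\textbf{Main obstacle.} The main obstacle is the equality characterisation in the barrier case for $n=4$. There we must enumerate the small configurations ($k=2,3$; components of sizes $1,3,5$) and verify that each extremal one is isomorphic to $A_4$, $B_4$ or $D_4$. For the delicate subcases I would try first to check non-solidity via Theorem~\ref{thm:odd-bicycle} applied to the brick contraction.
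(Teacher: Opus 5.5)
Your overall architecture is the same as the paper's. Bipartite graphs and bricks are disposed of by counting and Theorem~\ref{thm:brick-bound}, and a nontrivial ELP cut is then split into barrier cuts and 2-separation cuts. Your 2-separation count is essentially the paper's identity $|E(G)|=|E(S_X)|+|E(S_Y)|-2+\gamma$ with $|E(S_X)|\le 6$ and $|E(S_Y)|\le 11$.

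The gap is in the barrier case of (ii), which carries the whole equality characterisation and which you leave as the ``main obstacle''. Your crude bound ($\binom{2j+1}{2}$ internal edges plus $k(2j+1)$ edges to $B$) does not give $n^2+2$. For $k=2$ with components of orders $5$ and $1$ it gives $10+10+2=22$. Appealing to part~(i) for the contraction does not close this by itself. Part~(i) bounds the \emph{simple} graph $S=\operatorname{Sim}(G/\comp{V(K)})$, but each vertex of $K$ may send two edges into $B$, and these become parallel edges at the contraction vertex $z$. The missing step is the multiplicity-aware count
\[
|E(K)|+|E[V(K),B]|\le\bigl(|E(S)|-d_S(z)\bigr)+2d_S(z)=|E(S)|+d_S(z)\le 11+5 .
\]
This gives $|E(G)|\le 16+2=18$. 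Equality forces $|E(S)|=11$, $d_S(z)=5$, and every vertex of $K$ adjacent to both vertices of $B$. Then $S\cong D_3$ because $\Delta(A_3)=4<5$, so $K\cong F$ and $G\cong D_4$.

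Your stated reason for discarding $A_3$, that it ``would not respect maximality'', is not correct. Construct the following graph:
\begin{itemize}
\item take $B=\{x,y\}$;
\item let $K$ be $A_3$ minus a triangle vertex;
\item join $x$ and $y$ to the four vertices of $K$ that were adjacent to the deleted vertex;
\item add a vertex $w$ adjacent to $x$ and $y$.
\end{itemize}
This is a simple solid matching covered graph of order eight with $17$ edges. In it $B$ is a maximal barrier and $\operatorname{Sim}(G/\comp{V(K)})\cong A_3$. What excludes $A_3$ is the degree of $z$ in the count above, not maximality. So the passage to a maximal barrier with factor-critical components contributes nothing and can be dropped.

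The other barrier subcases are direct counts that you list but never perform:
\begin{itemize}
\item $k=2$ with two components of order $3$ gives at most $3+3+12=18$, with equality only for $B_4$;
\item $k=3$ with components of orders $3,1,1$ gives at most $3+15=18$, with equality only for $A_4$;
\item $k=4$ gives at most $16$.
\end{itemize}
Finally, your claim that ``optimising'' shows the excess over $n^2$ is at most $2$, with the stated equality cases, is asserted without argument. In the paper that general statement is Case~1 of Theorem~\ref{thm:main}, whose equality analysis itself invokes this proposition, so it cannot be borrowed here. For this proposition you only need the explicit $n=3,4$ enumeration.
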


\begin{proof}
(i)
If $G$ is bipartite, then the two parts of its bipartition have cardinality
three, and hence $|E(G)|\le9$. If $G$ is a brick, then
Lemma~\ref{lem:small-bricks} gives $|E(G)|\le10$. We may therefore assume
that $G$ is nonbipartite and is not a brick. It then has a nontrivial
tight cut. Then, by Theorem~\ref{thm:ELP}, $G$ has a nontrivial ELP cut,
denoted by $C$. Then $C$ is either a nontrivial barrier cut or a nontrivial 2-separation cut.

Suppose first that $C$ is a nontrivial barrier cut. Let $B$ denote a barrier of
$G$ such that $C$ is associated with $B$.
Set $k=|B|$. By Lemma~\ref{lem:barrier}, $B$ is a stable set and $G-B$ has
exactly $k$ odd components. Since $C$ is nontrivial and
$|V(G-B)|=6-k\ge k$, we obtain that $2\le k\le3$.
Consider first that $k=2$. The two components of $G-B$ have orders
three and one, respectively.
Let $Q$ denote the component of order three. Since $B$ is a stable set and no
edge joins vertices in distinct components of $G-B$, every edge of $G$ either
lies in $Q$ or joins a vertex of $B$ to a vertex of $G-B$. Since $G$ is simple,
$Q$ has at most three edges, and at most eight edges join $B$ and $V(G-B)$.
Hence $|E(G)|\le11$. If equality holds, then $Q\cong K_3$, and every vertex of
$B$ is adjacent to every vertex of $G-B$. Therefore,
\[
 G\cong\comp{K}_2\vee(K_3\cup K_1)=A_3.
\]
If $k=3$, then every component of $G-B$ is trivial. Since $B$ is a stable set
and no edge joins vertices in distinct components of $G-B$, every edge of $G$
joins a vertex of $B$ to a vertex of $G-B$. Since $G$ is simple,
$|E(G)|\le9$ in this case.

Now suppose that $C$ is a nontrivial 2-separation cut
associated with a
2-separation $\{u,v\}$. Since each component of $G-u-v$ is even and $G-u-v$
has order four, it has precisely two components whose vertex sets are denoted
by $X$ and $Y$. Then $|X|=|Y|=2$. Since $G$ is simple and $|X|=|Y|=2$, we have
\[
 |E(G)|=|E(G[X])|+|E(G[Y])|+|E[X\cup Y,\{u,v\}]|+|E(G[\{u,v\}])|\le11.
\]
If equality holds, then $G[X]\cong G[Y]\cong K_2$, every vertex of
$X\cup Y$ is adjacent to both $u$ and $v$, and $uv\in E(G)$. Therefore,
\[
 G\cong K_2\vee(2K_2)=D_3.
\]
Conversely, Lemma~\ref{lem:families} shows that $A_3$ and $D_3$ are simple
solid matching covered graphs of order six and size eleven. This proves (i).

 (ii)
If $G$ is bipartite, then $|E(G)|\le16$, a contradiction. If $G$ is a brick,
then Theorem~\ref{thm:brick-bound} shows that $|E(G)|\le16$, a contradiction.
Thus, $G$ is nonbipartite and is not a brick. It therefore has a nontrivial
tight cut. Then, by Theorem~\ref{thm:ELP}, $G$ has a nontrivial ELP cut,
denoted by $C$.

If $C$ is a nontrivial barrier cut. Let $B$ be the barrier of
$G$ such that $C$ is associated with $B$.
Let $k=|B|$. By Lemma~\ref{lem:barrier}, $B$ is a stable set and $G-B$ has
exactly $k$ odd components. Since $C$ is nontrivial and
$|V(G-B)|\ge k$, we obtain $2\le k\le4$.
If $k=4$, then every component of $G-B$ is trivial. Since $B$ is a stable set
and $G$ is simple, $|E(G)|\le16$, a contradiction. Hence $2\le k\le3$.

If $k=2$. The two components of $G-B$ either both have order three
or have orders five and one. Suppose first that both components have order three. Since $B$ is a stable set
and no edge of $G$ joins vertices in distinct components of $G-B$, every edge
of $G$ either lies in one of the two components of $G-B$ or joins a vertex of
$B$ to a vertex of $G-B$. As $G$ is simple, each component has at most three
edges. Moreover, since $|B|=2$ and $|V(G-B)|=6$, at most twelve edges join a
vertex of $B$ to a vertex of $G-B$. Hence $|E(G)|\le18$.
Since $|E(G)|=18$, both components are triangles,
and every vertex of $B$ is adjacent to every vertex of $G-B$. Therefore,
\[
 G\cong\comp{K}_2\vee(2K_3)=B_4.
\]

Suppose next that the components of $G-B$ have orders five and one. Let $Q$
denote the component of order five, let $w$ denote the vertex of the trivial
component, and let $B=\{x,y\}$. By Lemma~\ref{lem:two-connected}, $G$ is
2-connected, and hence $d(w)\ge2$. Since $w$ has no neighbour outside $B$ and $|B|=2$, we have $N_G(w)=B$.
The cut $\partial(V(Q))$ is a barrier cut associated with $B$. By
Lemma~\ref{lem:elp-cuts-tight}, it is tight and hence it is a separating cut. Let $H$ be the graph obtained by contracting $B\cup\{w\}$ to a
single vertex $z$. Since the cut is a separating cut, $H$ is matching covered. Since
$G$ is solid, Theorem~\ref{thm:solid-tight-contractions} implies that $H$ is
solid. Let $S=\operatorname{Sim}(H)$ be the underlying simple graph of $H$.
Lemma~\ref{lem:simplification} implies that $S$ is a simple solid matching covered graph of order six.

Since $B$ is a stable set and $N_G(w)=B$, the only edges of $G$ with both
endvertices in $B\cup\{w\}$ are $xw$ and $yw$. Hence $|E(H)|=|E(G)|-2=16$.
By the definition of $H$, we have $H-z=Q$. Since $G$ is simple, $Q$ has no
parallel edges, and hence $S-z=Q$. Let $q\in N_S(z)$. The number of edges joining $q$ and $z$ in $H$ is the number of edges
 joining $q$ to $B\cup\{w\}$ in $G$. Since $q$ and $w$ lie in distinct
components of $G-B$, we have $qw\notin E(G)$. Since $B=\{x,y\}$ and $G$ is
simple, at most two edges of $H$ join $q$ and $z$. This conclusion holds for
each $q\in N_S(z)$. Consequently,
\[
 d_H(z)\le2|N_S(z)|=2d_S(z).
\]

By the result of part~(i), $|E(S)|\le11$. Since $S$ has order six, $d_S(z)\le5$.
Consequently,
\[
 \begin{aligned}
 16=|E(H)|
 &=|E(H-z)|+d_H(z)\\
 &=|E(S-z)|+d_H(z)\\
 &\le |E(S-z)|+2d_S(z)\\
 &=|E(S)|+d_S(z)\\
 &\le11+5=16.
 \end{aligned}
\]
Equality holds throughout. Thus $|E(S)|=11$, $d_S(z)=5$, and
$d_H(z)=2d_S(z)$. It follows from part~(i) that
$S\cong A_3$ or $S\cong D_3$. Since $d_S(z)=5$ and $\Delta(A_3)=4$, we have
$S\cong D_3$. By the definition of $D_3$, $Q=S-z$, and so $Q\cong F$.
Since $S-z=Q$ and $d_S(z)=5=|V(Q)|$, we have $N_S(z)=V(Q)$. Since equality
holds in $d_H(z)\le2|N_S(z)|$, precisely two edges of $H$ join $q$ and $z$
for every $q\in V(Q)$. For every $q\in V(Q)$, the definition of $H$ implies
that precisely two edges of $G$ join $q$ to $B\cup\{w\}$. Since $G$ is simple,
$qw\notin E(G)$, and $B=\{x,y\}$,  every vertex of $B$ is adjacent to every vertex of $Q$. Together with $N_G(w)=B$, this shows that every vertex of $B$ is adjacent to every vertex of $G-B$. Since $B$ is a stable set, $G[B]\cong\comp{K}_2$.
Moreover, since $Q\cong F$ and $\{w\}$ is a component of $G-B$, we have
$G-B\cong F\cup K_1$. Therefore,
\[
 G\cong\comp{K}_2\vee(F\cup K_1)=D_4.
\]

Suppose that $k=3$. Since $G-B$ has exactly three odd components and
$|V(G-B)|=5$, one component has order three and the other two are trivial.
Let $Q$ denote the component
of order three. Since $B$ is a stable set and no edge of $G$ joins vertices in
distinct components of $G-B$, every edge of $G$ either lies in $Q$ or joins a
vertex of $B$ to a vertex of $G-B$. Since $G$ is simple, $Q$ has at most three
edges, and $G$ has at most fifteen edges join a vertex of $B$ to a vertex of $G-B$.
Hence
\[
 18=|E(G)|=|E(Q)|+|E[B,V(G-B)]|\le3+15=18.
\]
Equality holds throughout. Thus $|E(Q)|=3$ and
$|E[B,V(G-B)]|=15$. Hence $Q\cong K_3$, and every vertex of $B$ is adjacent
to every vertex of $G-B$. Since the other two components of $G-B$ are trivial,
$G-B\cong K_3\cup2K_1$. Moreover, since $B$ is a stable set of cardinality
three, $G[B]\cong\comp{K}_3$. Therefore,
\[
 G\cong\comp{K}_3\vee(K_3\cup2K_1)=A_4.
\]
Now, we consider the case that $C$ is a nontrivial 2-separation cut associated with a
2-separation $\{u,v\}$. Let $X$ and $Y$ be nonempty unions of components of
$G-u-v$ such that $X\cup Y=V(G)-\{u,v\}$ and $X\cap Y=\emptyset$. Then
$C=\partial(X\cup\{u\})=\partial(Y\cup\{v\})$.
Since every component of $G-u-v$ has even order and $|X|+|Y|=6$, we may
assume that $|X|=2$ and $|Y|=4$.
Let $H_X$ be the
 graph obtained from $G$ by
contracting $Y\cup\{v\}$ to a single vertex, and let $H_Y$ be the graph obtained by
contracting $X\cup\{u\}$ to a single vertex. Thus, $H_X$ and $H_Y$ are
two $C$-contractions of $G$. Let
$S_X=\operatorname{Sim}(H_X)$ and $S_Y=\operatorname{Sim}(H_Y)$.
The cut $C$ is a 2-separation cut and hence is tight by
Lemma~\ref{lem:elp-cuts-tight}. Since every tight cut is a separating cut, $H_X$
and $H_Y$ are both matching covered graphs. Since $G$ is solid,
Theorem~\ref{thm:solid-tight-contractions} implies that $H_X$ and $H_Y$ are
solid. By Lemma~\ref{lem:simplification}, $S_X$ and $S_Y$ are simple solid
matching covered graphs of orders four and six, respectively. By
Lemma~\ref{lem:two-connected}, $G$ is 2-connected. Thus both $u$ and $v$ have
a neighbour in each of $X$ and $Y$. Otherwise, after possibly interchanging
$u$ and $v$, we may assume that $u$ has no neighbour in one of $X$ and $Y$;
then $v$ is a cut vertex of $G$, a contradiction.
Let $y$ and $x$ denote the contraction vertices of $H_X$ and
$H_Y$, respectively. Since $G$ is simple and no edge of $G$ joins a vertex of
$X$ to a vertex of $Y$, the graph $S_X$
is obtained from $H_X$ by replacing the parallel edges with endvertices $u$
and $y$ by the single edge $uy$. Similarly, $S_Y$ is obtained from $H_Y$ by
replacing the parallel edges with endvertices $v$ and $x$ by the single edge
$vx$. Hence
\[
 |E(S_X)|=|E(G[X])|+|E[X,\{u,v\}]|+1
\]
and
\[
 |E(S_Y)|=|E(G[Y])|+|E[Y,\{u,v\}]|+1.
\]
Since no edge of $G$ joins a vertex of $X$ to a vertex of $Y$, and since
$G$ is simple, we have
\[
 \begin{aligned}
 |E(G)|
 &\le |E(G[X])|+|E(G[Y])|+|E[X,\{u,v\}]|
      +|E[Y,\{u,v\}]|+1\\
 &=\bigl(|E(S_X)|-1\bigr)+\bigl(|E(S_Y)|-1\bigr)+1\\
 &=|E(S_X)|+|E(S_Y)|-1\\
 &\le6+11-1=16,
 \end{aligned}
\]
where the last inequality follows since $S_X$ is a simple graph of order four
and, by part~(i), $|E(S_Y)|\le11$. This is a contradiction.
\end{proof}

Now, we are ready to prove Theorem~\ref{thm:main}.

\begin{proof}[\textbf{\upshape Proof of Theorem~\ref{thm:main}}]
Let $G$ be a simple solid matching covered graph of order $2n$. We will prove by
induction on $n$. For $n=2$, every simple bipartite graph of order four has at most $4$ edges. Thus, $|E(G)|\le n^2+2$. By Lemma~\ref{lem:order-four}(ii), the only simple
nonbipartite matching covered graph of order four is $K_4$. The number of edges in $K_4$ is six. Moreover, $K_4=A_2$. Thus, the theorem holds for $n=2$.

For the case in which $n\ge3$, assume that the theorem holds for every integer $m$ with
$2\le m<n$. Let $G$ be a simple solid matching covered graph of order $2n$.
If $G$ is bipartite, then the two parts of its bipartition have the same
cardinality, and both parts have $n$
vertices. Since $G$ is simple,
\[
 |E(G)|\le n^2<n^2+2.
\]
Hence the theorem holds when $G$ is bipartite.

Suppose that $G$ is nonbipartite. If $G$ is a brick of order six, then
Lemma~\ref{lem:small-bricks} gives that $|E(G)|\le10$, and $10\le11=n^2+2$
when $n=3$. If $G$ is a brick of order $2n$, for $n\ge4$,
Theorem~\ref{thm:brick-bound} shows that $|E(G)|\le n^2<n^2+2$.
Hence the theorem holds when $G$ is a brick. We may thus assume
that $G$ is not a brick. Since $G$ is a nonbipartite matching covered graph, the
definition of a brick implies that $G$ has a nontrivial tight cut. By
Theorem~\ref{thm:ELP}, $G$ has a nontrivial ELP cut, denoted by $C$.

%\smallskip

\noindent\textbf{\upshape Case 1:}
The ELP cut $C$ is a nontrivial barrier cut. Let $B$ be a barrier of $G$
such that $C$ is
associated with $B$. By Lemma~\ref{lem:barrier}, $B$ is a stable set and every
component of $G-B$ is odd. Since $C$ is nontrivial, $B$ is a nontrivial
barrier. Let $k=|B|$. Then $k\ge2$. Since $B$ is a barrier,
$o(G-B)=|B|=k$. As every component of $G-B$ is odd, $G-B$ has exactly
$k$ components. Let $Q_1,\ldots,Q_k$ be the components of $G-B$.
Set
\[
 |V(Q_i)|=2n_i-1,\qquad \text{for }1\le i\le k,
\]
where $n_i\ge1$. The sets $B,V(Q_1),\ldots,V(Q_k)$ form a
partition of $V(G)$. Counting vertices in
this partition gives
$2n=k+\sum_{i=1}^{k}(2n_i-1)=2\sum_{i=1}^{k}n_i$, and hence
\begin{equation}
 \sum_{i=1}^{k} n_i=n. \label{eq:sum-ni}
\end{equation}
Since no edge of $G$ joins vertices in different components of $G-B$ and no
edge of $G$ has both end vertices in $B$, we have
\begin{equation}
 |E(G)|=\sum_{i=1}^{k}\bigl(|E(Q_i)|+|E[V(Q_i),B]|\bigr).
 \label{eq:barrier-partition}
\end{equation}

Each $Q_i$ is an odd component of $G-B$, so
$\partial(V(Q_i))$ is a barrier cut associated with $B$. By
Lemma~\ref{lem:elp-cuts-tight}, this cut is tight and hence it is a separating cut. Let
$H_i$ be the graph obtained by contracting
$V(G)-V(Q_i)$ to a single vertex, denoted by $c_i$. Since the cut $\partial(V(Q_i))$ is a separating cut, $H_i$ is
matching covered, for $1\le i\le k$. Since $G$ is solid, $H_i$ is solid by
Theorem~\ref{thm:solid-tight-contractions}. Let
$S_i=\operatorname{Sim}(H_i)$ be the underlying simple graph of $H_i$. Since
$|V(Q_i)|=2n_i-1$, both $H_i$ and $S_i$ have order $2n_i$. By
Lemma~\ref{lem:simplification}, $S_i$ is a solid matching covered graph. Define
\[
 p_i=|\{x\in V(Q_i):N_G(x)\cap B\ne\varnothing\}|,
 \qquad \text{for }1\le i\le k.
\]
Thus, precisely $p_i$ vertices of $Q_i$ have neighbours in $B$.
Since $Q_i$ is a component of $G-B$, every edge of $H_i$ outside $E(Q_i)$
joins a vertex of $Q_i$ to the contraction vertex. For $x\in V(Q_i)$ with
$N_G(x)\cap B\ne\varnothing$, there is an edge between $x$ and the contraction
vertex $c_i$; for $x\in V(Q_i)$ with $N_G(x)\cap B=\varnothing$, there is no
edge between $x$ and the contraction vertex $c_i$. Since $G$ is simple, no two edges of
$Q_i$ are parallel, so every edge of $Q_i$ is retained in $S_i$.
Consequently,
\begin{equation}
 |E(S_i)|=|E(Q_i)|+p_i,\qquad \text{for }1\le i\le k.
 \label{eq:si-edge-count}
\end{equation}
If $S_i$ is bipartite, then, as a simple bipartite graph of order $2n_i$, it
has at most $n_i^2$ edges. Suppose that $S_i$ is nonbipartite. Then
$n_i\ne1$, and so $n_i\ge2$. Since $k\ge2$ and $n_j\ge1$ for every $j$ ($j\ne i$),
equation~\eqref{eq:sum-ni} gives
\[
 n_i=n-\sum_{\substack{1\le j\le k\\j\ne i}}n_j\le n-(k-1)\le n-1,
 \qquad \text{for }1\le i\le k.
\]
Hence $2\le n_i<n$. By the induction hypothesis,
$|E(S_i)|\le n_i^2+2$.

In the following, we will use $|E(S_i)|$ to derive $|E(G)|$.
Now, we let $\beta_i$ be given as
\begin{equation}
 \beta_i=
 \begin{cases}
  1, & \text{if } S_i \text{ is nonbipartite},\\
  0, & \text{if } S_i \text{ is bipartite},
 \end{cases}\qquad \text{for }1\le i\le k.
 \label{eq:beta}
\end{equation}
With this notation, set
\[
 \delta_i=n_i^2+2\beta_i-|E(S_i)|,
 \qquad \text{for }1\le i\le k.
\]
If $S_i$ is bipartite, then $\delta_i=n_i^2-|E(S_i)|$, while
$|E(S_i)|\le n_i^2$. If $S_i$ is nonbipartite, then
$\delta_i=n_i^2+2-|E(S_i)|$, while
$|E(S_i)|\le n_i^2+2$. Hence
$\delta_i\ge0$.
Next, let
\[
 \rho_i=kp_i-|E[V(Q_i),B]|,
 \qquad \text{for }1\le i\le k.
\]
By the definition of $p_i$, precisely $p_i$ vertices of $Q_i$ have
neighbours in $B$. For each $x\in V(Q_i)$ with $N_G(x)\cap B\ne\varnothing$,
the vertex $x$ forms a pair $(x,b)$ with each vertex $b\in B$. Since $|B|=k$,
there are $kp_i$ such pairs. Since $G$ is simple,
precisely $|E[V(Q_i),B]|$ of these pairs are pairs of adjacent vertices.
Thus, $\rho_i$ is precisely the number of pairs of nonadjacent vertices among
them, and hence $\rho_i\ge0$.
Finally, set
\[
 \tau_i=2n_i-1-p_i,
 \qquad \text{for }1\le i\le k.
\]
Since $|V(Q_i)|=2n_i-1$, there are precisely $\tau_i$ vertices of $Q_i$ that have
no neighbour in $B$. Hence $\tau_i\ge0$.

By the definitions of $\delta_i$, $\rho_i$ and $\tau_i$, we have
$|E(S_i)|=n_i^2+2\beta_i-\delta_i$,
$|E[V(Q_i),B]|=kp_i-\rho_i$ and $p_i=2n_i-1-\tau_i$ for
$1\le i\le k$.
These equalities, together with \eqref{eq:si-edge-count}, imply that
\[
 \begin{aligned}
 |E(Q_i)|+|E[V(Q_i),B]|
 &=|E(S_i)|-p_i+|E[V(Q_i),B]|\\
% &=n_i^2+2\beta_i-\delta_i-p_i+|E[V(Q_i),B]|\\
 &=n_i^2+2\beta_i-\delta_i-p_i+kp_i-\rho_i\\
% &=n_i^2+2\beta_i-\delta_i+(k-1)p_i-\rho_i\\
 &=n_i^2+2\beta_i-\delta_i+(k-1)(2n_i-1-\tau_i)-\rho_i\\
 &=n_i^2+2\beta_i+(k-1)(2n_i-1)\\
 &\quad-\bigl(\delta_i+\rho_i+(k-1)\tau_i\bigr),
 \qquad \text{for }1\le i\le k.
 \end{aligned}
\]
By adding the above identities for $i=1,\ldots,k$ and using
\eqref{eq:barrier-partition}, we obtain
\begin{equation}
 \begin{split}
 |E(G)|={}&\sum_{i=1}^{k}
 \bigl(n_i^2+2\beta_i+(k-1)(2n_i-1)\bigr)\\
 &-\sum_{i=1}^{k}\bigl(\delta_i+\rho_i+(k-1)\tau_i\bigr).
\end{split}
\label{eq:barrier-defect-one}
\end{equation}

Let
\[
 \lambda_i=n_i-1,\qquad \text{for }1\le i\le k.
\]
Since $|V(Q_i)|=2n_i-1$, we have
$\lambda_i=\frac{|V(Q_i)|-1}{2}$. Thus, $\lambda_i$ is a nonnegative integer;
$\lambda_i=0$ precisely when $Q_i$ is a single vertex, whereas $\lambda_i\ge1$ precisely
when $Q_i$ has at least three vertices. For $1\le i<j\le k$, the product
$\lambda_i\lambda_j$ is positive precisely when both $Q_i$ and $Q_j$ have at least three
vertices. Consequently,
$\sum_{1\le i<j\le k}\lambda_i\lambda_j=0$ if and only if at most one component of
$G-B$ has more than one vertex.

By \eqref{eq:sum-ni} and the definition of $\lambda_i$, we have
\[
 \sum_{i=1}^{k}\lambda_i=n-k
\]
and
\[
 \sum_{i=1}^{k}(2n_i-1)=2n-k.
\]
In particular, $n=k+\sum_{i=1}^{k}\lambda_i$. Using this identity and
$n_i=\lambda_i+1$, we have
\[
 \begin{aligned}
 \sum_{i=1}^{k}n_i^2+(k-1)(2n-k)
 &=\sum_{i=1}^{k}(\lambda_i+1)^2+(k-1)(2n-k)\\
 &=\sum_{i=1}^{k}\lambda_i^2+2\sum_{i=1}^{k}\lambda_i+k
   +(k-1)\left(k+2\sum_{i=1}^{k}\lambda_i\right)\\
 &=\sum_{i=1}^{k}\lambda_i^2+2k\sum_{i=1}^{k}\lambda_i+k^2\\
 &=\left(k+\sum_{i=1}^{k}\lambda_i\right)^2
   -2\sum_{1\le i<j\le k}\lambda_i\lambda_j\\
 &=n^2-2\sum_{1\le i<j\le k}\lambda_i\lambda_j.
 \end{aligned}
\]

Let $r=\sum_{i=1}^{k}\beta_i$. By \eqref{eq:beta}, $\beta_i=1$ precisely
when $S_i$ is nonbipartite. Thus, $r$ is the number of nonbipartite graphs
among $S_1,\ldots,S_k$. In particular, $\sum_{i=1}^{k}2\beta_i=2r$.
Using \eqref{eq:barrier-defect-one} and the identities above, we have
\begin{equation}
 \begin{aligned}
 n^2+2-|E(G)|
 &=n^2+2-\sum_{i=1}^{k}
 \bigl(n_i^2+2\beta_i+(k-1)(2n_i-1)\bigr)
 +\sum_{i=1}^{k}\bigl(\delta_i+\rho_i+(k-1)\tau_i\bigr)\\
 &=n^2+2-\left(\sum_{i=1}^{k}n_i^2+(k-1)(2n-k)\right)-2r
 +\sum_{i=1}^{k}\bigl(\delta_i+\rho_i+(k-1)\tau_i\bigr)\\
 &=n^2+2-\left(n^2-2\sum_{1\le i<j\le k}\lambda_i\lambda_j\right)-2r
 +\sum_{i=1}^{k}\bigl(\delta_i+\rho_i+(k-1)\tau_i\bigr)\\
 &=2\left(1-r+\sum_{1\le i<j\le k}\lambda_i\lambda_j\right)
 +\sum_{i=1}^{k}\bigl(\delta_i+\rho_i+(k-1)\tau_i\bigr).
 \end{aligned}
 \label{eq:barrier-defect-two}
\end{equation}

If $\beta_i=1$, then $S_i$ is a nonbipartite graph of even order and hence
has at least four vertices. Since $S_i$ has order $2n_i$, we have $n_i\ge2$
and hence $\lambda_i=n_i-1\ge1=\beta_i$. If $\beta_i=0$, then
$\lambda_i\ge0=\beta_i$. In either case, $\lambda_i\ge\beta_i$ for $1\le i\le k$.
Consequently,
\[
 \sum_{1\le i<j\le k}\lambda_i\lambda_j
 \ge\sum_{1\le i<j\le k}\beta_i\beta_j
 =\frac12\left(\left(\sum_{i=1}^{k}\beta_i\right)^2
 -\sum_{i=1}^{k}\beta_i^2\right)
 =\frac12(r^2-r),
\]
where the last equality follows from the definition of $r$ and the fact that
$\beta_i^2=\beta_i$ for $1\le i\le k$. Suppose that $r\ge2$. It follows that
\[
 1-r+\sum_{1\le i<j\le k}\lambda_i\lambda_j
 \ge 1-r+\frac{r^2-r}{2}
 =\frac{(r-1)(r-2)}2\ge0.
\]
If $r=0$, then
$1-r+\sum_{1\le i<j\le k}\lambda_i\lambda_j=1+
\sum_{1\le i<j\le k}\lambda_i\lambda_j\ge1$. If $r=1$, then
$1-r+\sum_{1\le i<j\le k}\lambda_i\lambda_j=
\sum_{1\le i<j\le k}\lambda_i\lambda_j\ge0$. Thus, in all cases,
$1-r+\sum_{1\le i<j\le k}\lambda_i\lambda_j\ge0$.
Moreover, $\delta_i,\rho_i,\tau_i\ge0$ for $1\le i\le k$, and $k-1\ge1$.
Hence $\delta_i+\rho_i+(k-1)\tau_i$ in
\eqref{eq:barrier-defect-two} is nonnegative. It follows from
\eqref{eq:barrier-defect-two} that $n^2+2-|E(G)|\ge0$, and therefore
$$|E(G)|\le n^2+2.$$ This proves the upper bound in Theorem \ref{thm:main}. In the following, we will consider the graphs acheiving the upper bound.

Assume that $|E(G)|=n^2+2$. By \eqref{eq:barrier-defect-two},
\[
 2\left(1-r+\sum_{1\le i<j\le k}\lambda_i\lambda_j\right)
 +\sum_{i=1}^{k}\bigl(\delta_i+\rho_i+(k-1)\tau_i\bigr)=0.
\]
Both terms on the left-hand side are nonnegative and hence are zero. Thus,
\begin{equation}
 1-r+\sum_{1\le i<j\le k}\lambda_i\lambda_j=0.
 \label{eq:barrier-equality}
\end{equation}
Since $\delta_i,\rho_i,\tau_i\ge0$ and $k-1>0$, it follows that
\begin{equation}
 \delta_i=\rho_i=\tau_i=0,\quad \text{for }1\le i\le k.
 \label{eq:barrier-zero-defects}
\end{equation}

For $1\le i\le k$, the equality $\tau_i=2n_i-1-p_i=0$ gives
$p_i=2n_i-1=|V(Q_i)|$, and
the equality $\rho_i=kp_i-|E[V(Q_i),B]|=0$ then gives
\begin{equation}
 |E[V(Q_i),B]|=kp_i=k|V(Q_i)|.
 \label{eq:barrier-cross-edges}
\end{equation}
Since $G$ is simple, $|B|=k$, and $p_i$ is the number of vertices of $Q_i$
having neighbours in $B$, it follows from \eqref{eq:barrier-cross-edges} that
every vertex of $Q_i$ is adjacent to every vertex of $B$. Thus every vertex of
$V(G)-B$ is adjacent to every vertex of $B$. Therefore, the contraction vertex $c_i$ of
$S_i$ is adjacent to every vertex of $Q_i$. Moreover, $\delta_i=n_i^2+2\beta_i-|E(S_i)|=0$ gives
\begin{equation}
 |E(S_i)|=n_i^2+2\beta_i,\qquad \text{for }1\le i\le k.
 \label{eq:barrier-si-equality}
\end{equation}

If $r=0$, then
\[
 1-r+\sum_{1\le i<j\le k}\lambda_i\lambda_j
 =1+\sum_{1\le i<j\le k}\lambda_i\lambda_j>0,
\]
contrary to \eqref{eq:barrier-equality}. If $r\ge3$,
then
\[
 1-r+\sum_{1\le i<j\le k}\lambda_i\lambda_j
 \ge 1-r+\frac{r^2-r}{2}=\frac{(r-1)(r-2)}2>0,
\]
which contradicts \eqref{eq:barrier-equality}.
Thus, $1\le r\le2$.

Suppose first that $r=1$. Then precisely one of $S_1,\ldots,S_k$ is
nonbipartite. Adjust notation so that $S_1$ is nonbipartite. Then
$\beta_1=1$ by \eqref{eq:beta}. Since $S_1$ is nonbipartite and has order
$2n_1$, we have $n_1\ge2$ and hence $\lambda_1=n_1-1\ge1$, whereas the equality
\eqref{eq:barrier-equality} gives
\[
 \sum_{1\le i<j\le k}\lambda_i\lambda_j=0.
\]
Since every $\lambda_i$ is nonnegative, $\lambda_1\lambda_j=0$ for $2\le j\le k$. Hence
$\lambda_2=\cdots=\lambda_k=0$, and $Q_2,\ldots,Q_k$ are trivial components of $G-B$.
Since $S_1$ is a simple solid matching covered graph of order $2n_1$ and
$2\le n_1<n$, the induction hypothesis applies to $S_1$. By
\eqref{eq:barrier-si-equality}, $|E(S_1)|=n_1^2+2$.
By the induction hypothesis and Proposition~\ref{prop:small-equality}, we have either
$n_1=2$ and
$S_1\cong A_2=K_4$; $n_1=3$ and $S_1\cong A_3$ or $S_1\cong D_3$; or
$n_1\ge4$ and $S_1\cong A_{n_1}$, $S_1\cong B_{n_1}$ or
$S_1\cong D_{n_1}$.

Now we will determine the structure of $G$ when $|E(G)|=n^2+2$.
The contraction vertex of $S_1$ is adjacent to every other vertex and hence
has degree $2n_1-1$. By definition, the maximum degree
$\Delta(A_{n_1})=n_1+1$, $\Delta(B_{n_1})=n_1+2$ and
$\Delta(D_{n_1})=n_1+2$.
If $S_1\cong A_{n_1}$, then $2n_1-1\le n_1+1$. Since $n_1\ge2$, this gives
$n_1=2$.
If $S_1\cong B_{n_1}$, then $2n_1-1\le n_1+2$, so $n_1\le3$, which is contrary to $n_1\ge4$.
If $S_1\cong D_{n_1}$, then $2n_1-1\le n_1+2$. Since $n_1\ge3$, this gives
$n_1=3$. Therefore, $S_1\cong A_2$ or $S_1\cong D_3$. Deleting the
contraction vertex from $S_1$ leaves $Q_1$. Hence $Q_1\cong K_3$ or
$Q_1\cong F$, respectively.

If $Q_1\cong K_3$, then $3=|V(Q_1)|=2n_1-1$, and hence $n_1=2$. Since
$Q_2,\ldots,Q_k$ are trivial, $n_i=1$ for $2\le i\le k$. Thus,
\eqref{eq:sum-ni} gives $n=\sum_{i=1}^{k}n_i=2+(k-1)=k+1$, and hence
$k=n-1$. Therefore, $G-B$ has $Q_1$ as its only nontrivial component and
$n-2$ trivial components.  Since $k\ge2$, $n\ge3$. As every vertex of $G-B$
is adjacent to every vertex of $B$, we have
\[
 G\cong\comp{K}_{n-1}\vee\bigl(K_3\cup(n-2)K_1\bigr)=A_n.
\]
If $Q_1\cong F$, then $5=|V(Q_1)|=2n_1-1$, so $n_1=3$. As
$Q_2,\ldots,Q_k$ are trivial components, $n_i=1$ for $2\le i\le k$.
By \eqref{eq:sum-ni}, we obtain
$n=\sum_{i=1}^{k}n_i=3+(k-1)=k+2$, and consequently $k=n-2$.
Hence $Q_1$ is the unique nontrivial component of $G-B$, while the other
$n-3$ components are trivial. Since $k\ge2$, $n\ge4$. By the definition of
$D_n$,
\[
 G\cong\comp{K}_{n-2}\vee\bigl(F\cup(n-3)K_1\bigr)=D_n.
\]

Suppose next that $r=2$. Then precisely two of $S_1,\ldots,S_k$ are
nonbipartite. Adjust notation so that $S_1$ and $S_2$ are nonbipartite. Then
$\beta_1=\beta_2=1$ by \eqref{eq:beta}. Since the simple graphs $S_1$ and $S_2$ are
nonbipartite and have orders $2n_1$ and $2n_2$, respectively, we have
$n_1\ge2$ and $n_2\ge2$. Hence $\lambda_1=n_1-1\ge1$ and $\lambda_2=n_2-1\ge1$.
Equation~\eqref{eq:barrier-equality} gives
\[
 \sum_{1\le i<j\le k}\lambda_i\lambda_j=1.
\]
Since $\lambda_1\lambda_2\ge1$ and every term in the sum is nonnegative, the equality
above gives $\lambda_1\lambda_2=1$ and $\lambda_1\lambda_j=0$ for $3\le j\le k$. Since $\lambda_1$ and
$\lambda_2$ are positive integers, $\lambda_1=\lambda_2=1$, and hence $\lambda_j=0$ for
$3\le j\le k$. Since $\lambda_i=n_i-1$, we have $n_1=n_2=2$ and $n_j=1$ for
$3\le j\le k$. Hence every component of $G-B$ other than $Q_1$ and $Q_2$
is trivial. By \eqref{eq:barrier-si-equality},
$|E(S_1)|=n_1^2+2\beta_1=6$ and
$|E(S_2)|=n_2^2+2\beta_2=6$. Since $S_1$ and $S_2$ are simple graphs of
order four, $S_1\cong S_2\cong K_4$. Deleting the contraction vertex from
$S_i$ leaves $Q_i$, for $i=1,2$. Hence $Q_1\cong Q_2\cong K_3$.

By \eqref{eq:sum-ni},
$n=\sum_{i=1}^{k}n_i=2+2+(k-2)=k+2$, and hence $k=n-2$. Since $k\ge2$, we
have $n\ge4$, and there are $n-4$ trivial components of $G-B$. We have shown
that every vertex of $G-B$ is adjacent to every vertex of $B$. It
follows that
\[
 G\cong\comp{K}_{n-2}\vee\bigl(2K_3\cup(n-4)K_1\bigr)=B_n.
\]
This completes Case~1.

\smallskip
\noindent\textbf{\upshape Case 2:}
The ELP cut $C$ is a 2-separation cut associated with a 2-separation
$\{u,v\}$ of $G$. Let
$X$ be the union of the vertex sets of a nonempty proper subset of the components
of $G-u-v$, and let $Y$ be the union of the vertex sets of the remaining
components. Let
\[
 C=\partial(X\cup\{u\})=\partial(Y\cup\{v\}).
\]
Since $\{u,v\}$ is a 2-separation of $G$, every component of $G-u-v$ has even order and no edge of $G$ joins a vertex of $X$ to a vertex of $Y$.
Thus, $X$ and $Y$ have even cardinality.
Set
\[
 |X|=2(s-1)
\]
and
\[
 |Y|=2(t-1).
\]
Since $X$ and $Y$ are nonempty and have even cardinality, $s,t\ge2$.
Counting the vertices of $G$ gives
\begin{equation}
 s+t=n+1. \label{eq:st}
\end{equation}

Let $H_X$ be a graph obtained from $G$ by contracting $Y\cup\{v\}$ to a single
vertex, and let $H_Y$ be a graph obtained from $G$ by contracting
$X\cup\{u\}$ to a single vertex. Thus $H_X$ and $H_Y$ are the two
$C$-contractions of $G$. The cut $C$ is
a 2-separation cut and hence is tight by Lemma~\ref{lem:elp-cuts-tight}. Since
every tight cut is a separating cut, $H_X$ and $H_Y$ are matching covered. Since $G$
is solid, Theorem~\ref{thm:solid-tight-contractions} implies that $H_X$ and $H_Y$
are solid.

Let $S_X=\operatorname{Sim}(H_X)$ and $S_Y=\operatorname{Sim}(H_Y)$ be the
underlying simple graphs. By Lemma~\ref{lem:simplification}, $S_X$ and $S_Y$ are
simple solid matching covered graphs. Since $|X|=2(s-1)$, the graphs $H_X$ and
$S_X$ have order $2s$. Similarly, $H_Y$ and $S_Y$ have order
$2t$.

By Lemma~\ref{lem:two-connected}, $G$ is 2-connected. Thus both $u$ and $v$ have
a neighbour in every component of $G-u-v$; otherwise, one of $u$ and $v$ would
be a cut vertex of $G$. Hence each of $u$ and $v$ has a neighbour in both $X$
and $Y$.

Let $y$ denote the contraction vertex of $H_X$. Since $u$ has a neighbour in $Y$,
the vertices $u$ and $y$ are adjacent in $H_X$. Since $G$ is simple and no edge
of $G$ joins $X$ and $Y$, the graph $S_X$ is obtained from $H_X$ by replacing
the parallel edges joining $u$ and $y$ by the single edge $uy$. Therefore,
\[
 |E(S_X)|=|E(G[X])|+|E[X,\{u,v\}]|+1.
\]
Similarly, let $x$ denote the contraction vertex of $H_Y$. Since $v$ has a
neighbour in $X$, the vertices $v$ and $x$ are adjacent in $H_Y$. Since $G$ is
simple and no edge of $G$ joins $X$ and $Y$, the graph $S_Y$ is obtained from
$H_Y$ by replacing the parallel edges joining $v$ and $x$ by the single edge
$vx$. Therefore,
\[
 |E(S_Y)|=|E(G[Y])|+|E[Y,\{u,v\}]|+1.
\]

Set
\[
 \gamma=
 \begin{cases}
  1, & \text{if } uv\in E(G)\\
  0, & \text{if } uv\notin E(G).
 \end{cases}
\]
Since no edge of $G$ joins $X$ and $Y$, the four edge sets $E(G[X])$, $E(G[Y])$,
$E[X,\{u,v\}]$ and $E[Y,\{u,v\}]$, together with $\{uv\}$ when
$\gamma=1$, constitute a partition of $E(G)$. Therefore,
\begin{equation}
 \begin{aligned}
 |E(G)|
 &=\bigl(|E(G[X])|+|E[X,\{u,v\}]|\bigr)\\
 &\quad+\bigl(|E(G[Y])|+|E[Y,\{u,v\}]|\bigr)+\gamma\\
 &=\bigl(|E(S_X)|-1\bigr)+\bigl(|E(S_Y)|-1\bigr)+\gamma\\
 &=|E(S_X)|+|E(S_Y)|-2+\gamma.
 \end{aligned}
 \label{eq:two-separation}
\end{equation}

Since $s,t\ge2$ and $s+t=n+1$, we have $2\le s,t<n$. Applying the induction
hypothesis to $S_X$ and $S_Y$ and using \eqref{eq:two-separation}, we obtain
\begin{equation}
\begin{split}
 |E(G)|&=|E(S_X)|+|E(S_Y)|-2+\gamma\\
       &\le(s^2+2)+(t^2+2)-2+\gamma\\
       &=s^2+t^2+2+\gamma\le s^2+t^2+3.
 \end{split}
 \label{eq:two-separation-bound}
\end{equation}
Using \eqref{eq:two-separation-bound}, \eqref{eq:st} and $s,t\ge2$, we obtain
\begin{equation}
 \begin{aligned}
 n^2+2-|E(G)|
 &\ge n^2+2-(s^2+t^2+3)\\
 &=(s+t-1)^2-s^2-t^2-1\\
 &=2st-2s-2t\\
 &=2(st-s-t)\\
 &=2\bigl((s-1)(t-1)-1\bigr)\ge0.
 \end{aligned}
 \label{eq:two-separation-gap}
\end{equation}
It follows that $|E(G)|\le n^2+2$.

Assume that $|E(G)|=n^2+2$. By \eqref{eq:two-separation-bound} and
\eqref{eq:two-separation-gap},
\[
 n^2+2=|E(G)|\le s^2+t^2+3\le n^2+2.
\]
Thus, equality holds throughout. Equality in \eqref{eq:two-separation-gap}
gives $(s-1)(t-1)=1$, and hence $s=t=2$. Equality in the last inequality of
\eqref{eq:two-separation-bound} gives $\gamma=1$, and hence $uv\in E(G)$.
Since $|E(S_X)|\le s^2+2$ and $|E(S_Y)|\le t^2+2$, equality in the first
inequality of \eqref{eq:two-separation-bound} gives
\[
 |E(S_X)|=s^2+2
\]
and
\[
 |E(S_Y)|=t^2+2.
\]
Since $s=t=2$, by \eqref{eq:st}, we have $n=3$. Thus $|X|=|Y|=2$. Moreover,
$S_X$ and $S_Y$ are simple graphs of order four with six edges, so
$S_X\cong S_Y\cong K_4$.
Since no edge of $G$ joins $X$ and $Y$, the isomorphisms
$S_X\cong S_Y\cong K_4$ imply that
$G[X]\cong G[Y]\cong K_2$ and every vertex of $X\cup Y$ is adjacent to both
$u$ and $v$. As $uv\in E(G)$, we conclude that
\[
 G\cong K_2\vee(2K_2)=D_3.
\]
Conversely, by Lemma~\ref{lem:families}, every graph listed in the theorem has
exactly $n^2+2$ edges.
This completes the proof of Theorem~\ref{thm:main}.
\end{proof}

\end{document}